\tikzset{>=stealth',
         cvertex/.style={circle,draw=black,inner sep=1pt,outer sep=3pt},
         vertex/.style={circle,fill=black,inner sep=1pt,outer sep=3pt},
         star/.style={circle,fill=yellow,inner sep=0.75pt,outer sep=0.75pt},
         tvertex/.style={inner sep=1pt,font=\scriptsize},
         gap/.style={inner sep=0.5pt,fill=white}}
\newcommand{\arrow}[2][20]
 {
  \hspace{-5pt}
  \begin{tikzpicture}
   \node (A) at (0,0) {};
   \node (B) at (#1pt,0) {};
   \draw [#2] (A) -- (B);
  \end{tikzpicture}
  \hspace{-5pt}
 }
\newcommand{\arrowl}[3][20]
 {
  \hspace{-5pt}
  \begin{tikzpicture}
   \node (A) at (0,0) {};
   \node (B) at (#1pt,0) {};
   \draw [#2] (A) -- node [above] {$#3$} (B);
  \end{tikzpicture}
  \hspace{-5pt}
 }
\newtheorem{theorem}{Theorem}[section]
\newtheorem{lemma}[theorem]{Lemma}
\newtheorem{definition}[theorem]{Definition}
\newtheorem{corollary}[theorem]{Corollary}
\theoremstyle{definition} 
\newtheorem{example}[theorem]{Example}
\newtheorem{remark}[theorem]{Remark}
\newcommand{\dbcoh}[1]{D^b(\coh{#1})}
\newcommand{\dbqcoh}[1]{D^b(\Qcoh{#1})}
\newcommand{\dcoh}[1]{D(\coh{#1})}
\newcommand{\da}[1]{D(\Mod{#1})}
\newcommand{\ka}[1]{K(\Mod{#1})}
\newcommand{\kqcoh}[1]{K(\Qcoh{#1})}
\newcommand{\ca}[1]{C(\Mod{#1})}
\newcommand{\dba}[1]{D^b(\Mod{#1})}
\newcommand{\db}[1]{D^b(\fgmod{#1})}
\newcommand{\dplusa}[1]{D^+(\Mod{#1})}
\newcommand{\dplus}[1]{D^+(\fgmod{#1})}
\newcommand{\dminusa}[1]{D^-(\Mod{#1})}
\newcommand{\dminus}[1]{D^-(\fgmod{#1})}
\newcommand{\kminuspa}[1]{K^-(\Proj{#1})}
\newcommand{\kminusa}[1]{K^-(\Mod{#1})}
\newcommand{\kbpa}[1]{K^b(\Proj{#1})}
\newcommand{\kbp}[1]{K^b(\fgProj{#1})}
\newcommand{\dqcoh}[1]{D(\Qcoh{#1})}
\newcommand{\Per}{{}^{-1}\mathfrak{Per}\w{X}}
\newcommand{\C}[1]{\mathbb{C}^{#1}}
\newcommand{\m}{\mathfrak{m}}
\newcommand{\sub}[1]{\langle \c{#1}\rangle}
\newcommand{\p}{\mathfrak{p}}
\newcommand{\pinspec}[1]{\p\in\textnormal{Spec}#1}
\newcommand{\s}[1]{\mathscr{#1}}
\renewcommand{\c}[1]{\mathcal{#1}}
\renewcommand{\u}[1]{\underline{#1}}
\newcommand{\uc}[1]{\u{\c{#1}}}
\renewcommand{\P}{\mathbb{P}^1}
\newcommand{\f}[1]{\mathfrak{#1}}
\newcommand{\ve}{\varepsilon}
\newcommand{\pull}[2]{\pi^{*}_{#1}\O{#2}}
\newcommand{\pullb}[1]{\pi^{*}_{#1}\mathscr{O}}
\newcommand{\w}[1]{\widetilde{#1}}
\newcommand{\e}{\varepsilon}
\renewcommand{\t}[1]{\textnormal{#1}}
\renewcommand{\tt}[1]{\mathtt{#1}}
\newcommand{\ver}[1]{\xy
\POS (0.8,0) *\cir<2pt>{}+{\txt{#1}} \endxy}
\def\minus{\hbox{-}}
\def\plus{\hbox{+}}
\newcommand{\sq}[1]{\xy
\POS(0,0)*+\hbox{#1} *\frm{-,}\endxy}
\newcommand{\verr}[1]{\xy
\POS (2,0) *\cir<4pt>{}+{\txt{#1}} \endxy}
\def\Cl{\mathop{\rm Cl}\nolimits}
\def\op{\mathop{\rm op}\nolimits}
\def\GL{\mathop{\rm GL}\nolimits}
\def\SL{\mathop{\rm SL}\nolimits}
\def\CM{\mathop{\rm CM}\nolimits}
\def\SCM{\mathop{\rm SCM}\nolimits}
\def\OCM{\mathop{\Omega{\rm CM}}\nolimits}
\def\depth{\mathop{\rm depth}\nolimits}
\def\fl{\mathop{\rm fl}\nolimits}
\def\hgt{\mathop{\rm ht}\nolimits}
\def\mod{\mathop{\rm mod}\nolimits}
\def\coh{\mathop{\rm coh}\nolimits}
\def\Mod{\mathop{\rm Mod}\nolimits}
\def\refl{\mathop{\rm ref}\nolimits}
\def\proj{\mathop{\rm proj}\nolimits}
\def\ind{\mathop{\rm ind}\nolimits}
\def\Kdim{\mathop{\rm Kdim}\nolimits}
\def\pd{\mathop{\rm proj.dim}\nolimits}
\def\id{\mathop{\rm id}\nolimits}
\def\Hom{\mathop{\rm Hom}\nolimits}
\def\hom{\mathop{\rm hom}\nolimits}
\def\End{\mathop{\rm End}\nolimits}
\def\Ext{\mathop{\rm Ext}\nolimits}
\def\ext{\mathop{\rm ext}\nolimits}
\def\H{\mathop{\rm H^{}}\nolimits}
\def\h{\mathop{\rm h^{}}\nolimits}
\def\Tor{\mathop{\rm Tor}\nolimits}
\def\Tr{\mathop{\rm Tr}\nolimits}
\def\add{\mathop{\rm add}\nolimits}
\def\Cok{\mathop{\rm Cok}\nolimits}
\def\Ker{\mathop{\rm Ker}\nolimits}
\def\Im{\mathop{\rm Im}\nolimits}
\def\Sing{\mathop{\rm Sing}\nolimits}
\def\Supp{\mathop{\rm Supp}\nolimits}
\def\Ass{\mathop{\rm Ass}\nolimits}
\def\Spec{\mathop{\rm Spec}\nolimits}
\def\Max{\mathop{\rm Max}\nolimits}
\def\gl{\mathop{\rm gl.dim}\nolimits}
\def\AA{\mathop{\mathcal{A}}\nolimits}
\def\CC{\mathop{\mathcal{C}}\nolimits}
\def\DD{\mathop{\mathcal{D}}\nolimits}
\def\FF{\mathop{\mathcal{F}}\nolimits}
\def\XX{\mathop{\mathcal{X}}\nolimits}
\def\D{\mathop{\rm{D}^{}}\nolimits}
\def\Db{\mathop{\rm{D}^b}\nolimits}
\def\Kb{\mathop{\rm{K}^b}\nolimits}
\begin{document}

\title{\textsc The $\GL(2,\C{})$ McKay Correspondence}
\author{Michael Wemyss}
\address{Mathematical Institute, 24-29 St Giles', Oxford, OX1 3LB, UK.} \email{wemyss.m@googlemail.com}
\maketitle

\begin{abstract}
In this paper we show that for any affine complete rational surface singularity the quiver of the reconstruction algebra can be determined combinatorially from the dual graph of the minimal resolution.    As a consequence the derived category of the minimal resolution is equivalent to the derived category of an algebra whose quiver is determined by the dual graph.  Also, for any finite subgroup $G$ of $\GL(2,\C{})$, it means that the endomorphism ring of the special CM $\C{}[[x,y]]^G$-modules can be used to build the dual graph of the minimal resolution of $\C{2}/G$, extending McKay's observation for finite subgroups of $\SL(2,\C{})$ to all finite subgroups of $\GL(2,\C{})$.
\end{abstract}

\parindent 20pt
\parskip 0pt
\section{Introduction}\label{Section:Intro}
When working with quotient singularities $V/G$ from the viewpoint of resolution of singularities and derived categories the object one first writes down is the skew group ring $\mathbb{C}[V]\# G$ (=$G$-equivariant sheaves) since this object is the canonical way to encode the representation theory of $G$ into the geometry.  This algebra satisfies many nice homological properties, in particular it has finite global dimension and so we often view it as a `non-commuatitive resolution' over its centre.  For quotients  with $G\leq \SL(V)$ there is much evidence which suggests that this is a good idea, but there is always a problem extracting and understanding the geometric information that $\C{}[V]\# G$ encodes.

The point in this paper is that for non-Gorenstein surface quotient singularities (i.e. those by finite small groups $G\leq\GL(2,\C{})$ that are not inside $\SL(2,\C{})$), in order to be able to see the geometry in the most clear way the skew group ring $\C{}[x,y]\# G$ is far too large; one should instead pass to a much smaller algebra, the so-called reconstruction algebra.  The benefit of passing to this smaller algebra is twofold --- firstly we are able to recover the link with the dual graph of the minimal resolution which is obscured in the world of $G$-equivariant sheaves (which we do in this paper), and secondly it is much easier to extract the geometry (which we do in other papers) since via the reconstruction algebra understanding resolutions of quotients by large non-abelian groups can turn out to be as easy as understanding those arising from vastly smaller cyclic groups.

For non-Gorenstein surface quotient singularities the number of exceptional curves in the minimal resolution of $\C{2}/ G$ is strictly less than the number of irreducible representations. This problem led Wunram \cite{Wunram_generalpaper} to develop the
idea of a special representation (equivalently special CM module) so that after passing to the
non-trivial indecomposable special representations the 1-1 correspondence with the
exceptional curves is recovered. Note that the definition of special
representation is homological since it is defined by the
vanishing of cohomology of the dual of a certain vector bundle on the minimal
resolution.  Even in fairly easy examples, determining which representations are special is not a straightforward task \cite{Iyama_Wemyss_specials}. 

Wunram's results give the necessary 1-1 correspondence and so here we study  the non-commutative algebra given by the endomorphism ring of the sum of the  indecomposable special CM modules.  In \cite{Wemyss_reconstruct_A} it was discovered algebraically that for finite small cyclic subgroups of $\GL(2,\C{})$ the quiver of this non-commutative algebra determines and is determined by the dual graph of  the minimal resolution $\hat{X}$ of the singularity $\mathbb{C}^2/G$, labelled with self-intersection numbers.  This correspondence is purely on the level of the underlying quiver; it was further discovered that if we add in the extra information of the relations then in fact one can recover the whole space $\hat{X}$ (not just the dual graph) as a certain GIT quotient, and also that the endomorphism algebra describes the derived category of $\hat{X}$.   Furthermore the global dimension of the endomorphism algebra was found to be either 2 or 3, which since we are studying surfaces is a little surprising.

In this paper we relate Wunram's work to that of Bridgeland \cite{Bridgeland} and  Van den Bergh \cite{VdB} to give a non-explicit geometric proof of some of the above results which furthermore works in greater generality.  Note that by using Riemann-Roch and Serre duality the proofs are quite routine, giving not only the number of arrows but also the number of relations.

Our first main theorem is the following:

\begin{theorem}[=\ref{main} and \ref{up=down}]\label{intromain}
Let $R$ be any affine complete rational surface singularity and let $\hat{X}$ be the minimal resolution of $\Spec R$.  Then the quiver of $\End_{R}( \bigoplus M )$, where the sum is taken over all indecomposable special CM $R$-modules, can be computed combinatorially from the dual graph of $\hat{X}$ labelled with self-intersection numbers.
\end{theorem}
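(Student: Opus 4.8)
The plan is to cross the bridge from modules to geometry via a tilting equivalence, and then to read off the quiver purely from intersection theory on $\hat X$ using Riemann--Roch and Serre duality. First I would realise $A:=\End_{R}(\bigoplus M)$ as the endomorphism ring of a tilting bundle on the minimal resolution $\hat X$. By Wunram's correspondence \cite{Wunram_generalpaper} each nontrivial indecomposable special CM module $M_i$ is $\pi_{*}\mathcal M_i$ for a bundle $\mathcal M_i$ on $\hat X$ with $c_1(\mathcal M_i)\cdot E_j=\delta_{ij}$ and $H^1(\mathcal M_i^{\vee})=0$; adjoining $\mathcal O_{\hat X}$ (for the free module $R$) produces $\mathcal T=\mathcal O_{\hat X}\oplus\bigoplus_i\mathcal M_i$. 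Since $R$ is a rational singularity, the relevant higher direct images vanish, and I would invoke the tilting theory of Van den Bergh \cite{VdB} and Bridgeland \cite{Bridgeland} to conclude that $\mathcal T$ is tilting with $\End_{\hat X}(\mathcal T)\cong A$ and a derived equivalence $\Db(\hat X)\simeq\Db(A)$. It then suffices to compute the quiver of $\End_{\hat X}(\mathcal T)$.

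Next I would identify the simple $A$-modules geometrically. Under the equivalence the simples $S_i$ correspond to sheaves $\mathcal S_i$ supported on the exceptional fibre: for $i\neq 0$ these are the twisted structure sheaves $\mathcal O_{E_i}(-1)$ of the exceptional curves (up to shift), while $S_0$ corresponds to a distinguished sheaf attached to $\mathcal O_{\hat X}$. The quiver of the basic algebra $A$ is governed by $\dim_{\C{}}\Ext^1_A(S_i,S_j)$ (the arrows $i\to j$) and $\dim_{\C{}}\Ext^2_A(S_i,S_j)$ (the minimal relations), and the derived equivalence turns these into the Ext-groups $\Ext^{\bullet}_{\hat X}(\mathcal S_i,\mathcal S_j)$ between sheaves living on the curves $E_i$. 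This is precisely the step that converts the combinatorial content of the theorem into an intersection-theoretic computation.

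The third step is the computation itself. For $i,j\neq 0$ Riemann--Roch on the smooth rational surface $\hat X$ gives
\[
\chi(\mathcal S_i,\mathcal S_j)=\hom(\mathcal S_i,\mathcal S_j)-\ext^1(\mathcal S_i,\mathcal S_j)+\ext^2(\mathcal S_i,\mathcal S_j),
\]
where the left-hand side, for $\mathcal S_i=\mathcal O_{E_i}(-1)$, depends only on the intersection numbers $E_i\cdot E_j$ and the self-intersections $E_i^2$ --- exactly the data labelling the dual graph. Serre duality $\Ext^2_{\hat X}(\mathcal S_i,\mathcal S_j)\cong\Hom_{\hat X}(\mathcal S_j,\mathcal S_i\otimes\omega_{\hat X})^{*}$, combined with adjunction $\omega_{\hat X}\cdot E_i=-E_i^2-2$ for the rational curves $E_i\cong\P$, pins down the relation terms from the same matrix; once $\hom(\mathcal S_i,\mathcal S_j)$ is seen to vanish for $i\neq j$, the Euler characteristic separates cleanly into the arrow-count $\ext^1$ and the relation-count $\ext^2$. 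This simultaneously yields the number of arrows and the number of relations, and the duality is what would underlie the symmetry statement (\ref{up=down}) comparing arrows $i\to j$ with arrows $j\to i$.

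The main obstacle I expect lies in pinning down the objects $\mathcal S_i$ and handling the vertex $0$. Identifying exactly which sheaves the simples correspond to --- and verifying that they are the curve sheaves $\mathcal O_{E_i}(-1)$ --- requires matching Wunram's special bundles with the heart of Van den Bergh's perverse $t$-structure, rather than merely quoting the derived equivalence abstractly. The distinguished simple $S_0$ is more delicate: its arrows to the $E_i$ and the relations through it are controlled by the fundamental cycle and by which exceptional curves meet the chosen component, so the Riemann--Roch bookkeeping for $i=0$ or $j=0$ must be done separately. Finally, one must confirm the vanishing of the off-diagonal $\Hom$-spaces and of the unwanted higher Ext's, so that the single Euler characteristic genuinely records arrows and relations and not compositions; I anticipate organising this as an induction along the dual graph, using negative-definiteness of the intersection form to order the vertices.
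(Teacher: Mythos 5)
Your plan is essentially the paper's own proof: Van den Bergh's tilting/perverse-sheaf equivalence identifies $\End_R(\bigoplus M)$ with $\End_{\hat{X}}(\c{O}_{\hat{X}}\oplus\c{M}_I)$, the simples are the shifted curve sheaves $\c{O}_{E_i}(-1)[1]$ together with $\c{O}_{Z_f}$ (your ``distinguished sheaf'' is exactly the structure sheaf of the fundamental cycle), and the arrow/relation counts are $\ext^1$ and $\ext^2$ computed from short exact sequences via Riemann--Roch, Serre duality and adjunction. The only slip is cosmetic: Lemma~\ref{up=down} is not a symmetry of arrows but the statement that the natural map $\End_{\hat{X}}(\c{O}_{\hat{X}}\oplus\c{M}_I)\to\End_R(\oplus M)$ is an isomorphism (proved by showing the left side is reflexive via Esnault's criterion), which is precisely your first step.
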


We explain the terminology in \S\ref{Section:Prelim} and state this more precisely in \S\ref{Section:Exts} however we note here that the combinatorics are easy and so the computation of the quiver is very quick.  We call the endomorphism ring in the above correspondence the \emph{reconstruction algebra}, since its quiver can be reconstructed from the dual graph.  Of course, as is the case for preprojective algebras in the classical McKay Correspondence, to obtain $\End_{R}( \bigoplus M )$ from the dual graph requires us to add information in the form of an extra vertex, whereas to obtain the dual graph from $\End_{R}( \bigoplus M )$ one must lose information by killing a specified vertex.  

Using the computation of the ext groups in the proof of the above theorem we also obtain the following, extending the result \cite[2.15]{Iyama_Wemyss_specials}:

\begin{corollary}[=\ref{gldim2}]\label{introgldim}
Let $R$ be an affine complete rational surface singularity and set $A:=\End_{R}(\bigoplus M)$ where the sum is taken over all indecomposable special CM $R$-modules.  Then 
\[
\gl A=\left\{\begin{array}{cl} 2& \mbox{if $R$ is Gorenstein}\\ 3 & \mbox{else.}\end{array}\right. 
\]
When $R$ is Gorenstein all simple left $A$-modules and all simple right $A$-modules have projective dimension 2.  When $R$ is not Gorenstein all simple right $A$-modules have projective dimension 2 except the simple corresponding to $\star$, which has projective dimension 3.  As left $A$-modules, the projective dimension of the simples at $\star$ and all curves corresponding to $(-2)$-curves have projective dimension 2, whereas all other simples have projective dimension 3.
\end{corollary}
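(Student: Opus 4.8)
The plan is to read each projective resolution off the derived equivalence used to prove Theorem~\ref{intromain}. By the results of Van den Bergh, the special CM modules are the $\pi_{*}$ of a tilting bundle $\c{T}=\bigoplus_i\c{M}_i$ on $\hat{X}$ (with $\c{M}_0=\c{O}$ the summand attached to $\star$), so that $\mathbf{R}\Hom_{\hat{X}}(\c{T},-)$ gives an equivalence $\dbcoh{\hat{X}}\xrightarrow{\sim}\Db(\mod A)$ sending $\c{M}_i$ to the indecomposable projective right $A$-module $P_i$. Since the projective dimension of a simple module is the largest $n$ for which some $\Ext^n_A(S,S_j)$ is nonzero, and the equivalence identifies $\Ext^n_A(S_i,S_j)$ with a sheaf-level group $\Ext^n_{\hat{X}}(\c{E}_i,\c{E}_j)$ between the objects $\c{E}_i$ corresponding to the simples, I would first pin down the $\c{E}_i$. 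The curve simples are the standard ones, $\c{E}_i=\c{O}_{E_i}(-1)[1]$; the simple at $\star$ corresponds to a perverse sheaf $\c{E}_0$ supported on the exceptional fibre, and identifying it precisely is where the non-Gorenstein subtlety is concentrated.

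The second step is to evaluate these groups with Riemann--Roch, Serre duality and the self-intersection numbers recorded in the dual graph --- precisely the tools that already compute the arrows ($\Ext^1$) and the relations ($\Ext^2$) in the proof of the main theorem. Because $\hat{X}$ is a smooth surface, $\Ext^n_{\hat{X}}$ between honest sheaves vanishes for $n>2$; the shifts present in the $\c{E}_i$ are what let projective dimension $3$ occur, and the obstruction to termination at step $2$ is governed by the Serre-duality pairing $\Ext^2_{\hat{X}}(F,G)\cong\Hom_{\hat{X}}(G,F\otimes\omega)^{*}$ with $\omega=\omega_{\hat{X}}$. Here adjunction, $\deg(\omega|_{E_i})=-2-E_i^2$, is decisive: along a $(-2)$-curve $\omega$ is trivial, while along any other exceptional curve it is not. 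When $R$ is Gorenstein every exceptional curve is a $(-2)$-curve, so $\omega$ is trivial on the whole fibre, the potential top Ext groups all vanish, and all simples on both sides have projective dimension $2$, giving $\gl A=2$. When $R$ is not Gorenstein some curve has $E_i^2\neq-2$, the twist by $\omega$ no longer dies, and the resulting nonzero group produces a genuine third syzygy, so that $\gl A=3$.

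The final and most delicate step is to locate exactly which simples attain projective dimension $3$ and to account for the left/right asymmetry. The passage from right to left $A$-modules corresponds on $\hat{X}$ to the Serre functor $-\otimes\omega[2]$, equivalently to replacing $\c{T}$ by its dual, and this twist by $\omega$ is the reason the two sides disagree. For right modules I expect the computation to show that the only resolution not stopping at step $2$ is that of the simple at $\star$ --- the non-Gorenstein defect sitting entirely over the distinguished vertex --- giving projective dimension $3$ there and $2$ for every curve. Applying the $\omega$-twist then redistributes this single defect among the curves on the left side: a curve simple acquires a third syzygy exactly when $\deg(\omega|_{E_i})\neq 0$, i.e.\ when $E_i$ is not a $(-2)$-curve, while the simples at $\star$ and at the $(-2)$-curves now terminate at step $2$. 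The main obstacle is the bookkeeping at $\star$: one must identify the perverse sheaf $\c{E}_0$ and compute its Ext groups against the $\c{O}_{E_j}(-1)[1]$ using the fundamental cycle rather than the numerics of a single $\mathbb{P}^1$, and then check that no Ext group survives past degree $3$, so that the two lists above are exhaustive.
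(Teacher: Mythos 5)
Your plan is essentially the paper's own proof: the paper computes the full table of $\ext^t$ between the perverse simples $S_\star=\c{O}_{Z_f}$ and $S_i=\c{O}_{E_i}(-D)[1]$ in Theorem~\ref{extcalculation} using exactly the tools you name (Riemann--Roch, Serre duality on the exceptional curves, adjunction giving $\deg\omega|_{E_i}=-2-E_i^2$), and then reads off the left and right projective dimensions by reversing the order of the arguments in $\Ext$, which is your $\omega$-twist in disguise. The two ingredients you leave open are both supplied by citation in the paper: the identification of the simple at $\star$ as $\c{O}_{Z_f}$ is quoted from Van den Bergh, and the identification $\End_{\hat{X}}(\c{O}_{\hat{X}}\oplus\c{M}_I)\cong\End_R(\bigoplus M)$ is Lemma~\ref{up=down}.
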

Thus not only does the homologically homogeneous property fail for reconstruction algebras, it fails asymmetrically.  

Applying Theorem~\ref{intromain} to quotients of $\C{2}$ by finite subgroups of $\GL(2,\C{})$ we also obtain the result which motivated this work.
\begin{corollary}[The $\GL(2,\C{})$ McKay Correspondence]\label{GLintro}
For any finite small subgroup $G$ of $\GL(2,\C{})$ let $\hat{X}\rightarrow\C{2}/G$ be the minimal resolution. Then the special representations of $G$ can be used to build the dual graph of $\hat{X}$ by taking the quiver of $\End_{\C{}[[x,y]]^G}( \oplus_{\rho\,\,\t{special}} (\rho\otimes \C{}[[x,y]])^G)$ and deleting the vertex corresponding to the trivial representation.  
\end{corollary}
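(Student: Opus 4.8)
The plan is to deduce the corollary directly from Theorem~\ref{intromain} by checking that the invariant ring $R:=\C{}[[x,y]]^G$ lies within its scope and by transporting its conclusion through the classical representation-theoretic dictionary for quotient singularities. First I would record that for a finite small subgroup $G\leq\GL(2,\C{})$ the ring $R$ is a complete local rational surface singularity: smallness guarantees that $G$ acts freely in codimension one, so $R$ is the full ring of invariants of a genuine surface germ, and every two-dimensional quotient singularity is rational by the classical results of Prill and Brieskorn. Thus $R$ satisfies the hypotheses of Theorem~\ref{intromain}, and its minimal resolution $\hat{X}\to\Spec R$ is precisely the minimal resolution of $\C{2}/G$, so the dual graph occurring in the theorem is the one we wish to reconstruct.

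Next I would set up the correspondence between $G$-representations and CM $R$-modules. Writing $S=\C{}[[x,y]]$, the functor $\rho\mapsto(\rho\otimes_{\C{}}S)^G$ carries representations of $G$ to CM $R$-modules, and by Auslander's form of the McKay correspondence it restricts to a bijection between irreducible representations and indecomposable CM $R$-modules, sending the trivial representation to the free module $R$ itself. Wunram's theory refines this: the special representations are exactly those $\rho$ for which $(\rho\otimes S)^G$ is a special CM module, so the functor matches indecomposable special representations with indecomposable special CM modules. Under this dictionary the module $\bigoplus M$ appearing in Theorem~\ref{intromain} is identified with $\oplus_{\rho\,\t{special}}(\rho\otimes\C{}[[x,y]])^G$, whence the two endomorphism rings, and therefore their quivers, coincide.

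It then remains to identify the vertex to be deleted. The vertices of the quiver of $\End_{R}(\bigoplus M)$ are indexed by the indecomposable special CM modules, one of which is $R$, the image of the trivial representation; this is exactly the distinguished vertex $\star$ that the precise form of Theorem~\ref{intromain} stated in \S\ref{Section:Exts} adds to the dual graph in order to produce the quiver. Since the theorem asserts that the passage between dual graph and quiver is an explicit combinatorial procedure, with the dual graph recovered from the quiver precisely by killing $\star$, deleting the vertex corresponding to the trivial representation returns the dual graph of $\hat{X}$, which is the assertion of the corollary.

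The one genuinely substantive point, and the step I expect to require the most care, is the bookkeeping that identifies the vertex $\star$ of the reconstruction algebra with the trivial representation: one must confirm both that $R$ is itself a special CM module, so that it indeed indexes a vertex, and that it is the single vertex whose removal implements the combinatorial inverse guaranteed by Theorem~\ref{intromain}. Everything else is a transcription of the general theorem through the equivalence $\rho\mapsto(\rho\otimes S)^G$.
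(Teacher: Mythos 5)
Your proposal is correct and follows essentially the same route as the paper: the paper deduces Corollary~\ref{GLintro} in one line from Theorem~\ref{intromain} (itself Corollary~\ref{main} plus Lemma~\ref{up=down}) together with the observation that in the quotient case the indecomposable special CM $R$-modules are exactly the $(\rho\otimes\C{}[[x,y]])^G$ for $\rho$ special, with the trivial representation giving $R$ and hence the vertex $\star$. Your additional checks (rationality of quotient singularities, speciality of $R$ itself) are exactly the implicit prerequisites the paper leaves unstated.
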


This says, provided we use Auslander's endomorphism ring perspective \cite{Auslander_AR=McKay}, that the representation theory of the special CM modules determines the dual graph of the minimal resolution in exactly the same way as in the classical $\SL(2,\C{})$ case.  Again we emphasize that the reconstruction algebra is in general much smaller than $G$-equivariant sheaves, allowing us to extract the geometry much more easily.

Although the geometric proof of Theorem~\ref{intromain} is quite slick, the main content in reconstruction algebras is found in their relations and other than telling us their number the geometric proof does nothing more. Furthermore $\End_{\C{}[[x,y]]^G}( \oplus_{\rho\,\,\t{special}} (\rho\otimes \C{}[[x,y]])^G)$ is an entirely representation-theoretic gadget and so turning Corollary~\ref{GLintro} around we should be able to deduce the dual graph by using only the representation theory.

This motivates us to provide a second proof of Corollary~\ref{GLintro} in which we show how to compute $\End_{\C{}[[x,y]]^G}( \oplus_{\rho\,\,\t{special}} (\rho\otimes \C{}[[x,y]])^G)$  purely representation-theoretically, assuming none of the geometry.  This method actually turns out to give more information since it immediately tells us the grading of the algebra and it also provides a method to obtain the relations.  For brevity in this paper we restrict ourselves to only determining the quiver and so we ignore the relations; we consider this problem in separate papers \cite{Wemyss_reconstruct_A,Wemyss_reconstruct_D(i),Wemyss_reconstruct_D(ii)} by using a combination of the two approaches.

The main idea behind the representation-theoretic proof for quotient singularities is that the number of irreducible maps between the special CM modules can be  determined by using a simple counting argument on the AR quiver of $\C{}[[x,y]]^{G}$.  The relations on the reconstruction algebra are then induced by the  mesh relations on the AR quiver.  This method involves case-by-case analysis and relies on the classification of the special CM modules in \cite{Iyama_Wemyss_specials} and so is quite space-intensive, thus we prove only   a few examples. 

The structure of this paper is as follows: we begin in \S\ref{Section:Prelim}
with preliminaries involving intersection theory and perverse sheaves.  In \S\ref{Section:Exts} we give the geometric proof of the main results above, and in \S\ref{Section:RepMethod} we give the alternative representation-theoretic method for quotient singularities.  The remainder of the paper is devoted to the quotient case: in \S\ref{groups} we translate the intersection theory into simple rules and in the remaining sections we draw the quivers for all finite subgroups of $\GL(2,\C{})$.

\section{Preliminaries}\label{Section:Prelim}

Throughout this paper let $X=\Spec R$ be an affine complete rational surface singularity over $\C{}$, let $f:\w{X}\rightarrow \Spec R$ be some resolution and denote the exceptional curves by $\{ E_i\}_{i\in I}$.  Resolutions will not be minimal unless specified.  We shall always assume that our dual graphs are labelled with the corresponding self-intersection numbers:
\begin{definition}
Suppose $\{ E_i \}_{i\in I}$  is a collection of $\mathbb{P}^1$s forming the exceptional locus in a resolution of some affine rational surface singularity.  The dual graph is defined as follows: for each curve draw a vertex, and join two vertices if and only if the corresponding curves intersect.  Furthermore label every vertex with the self-intersection number of the corresponding curve.
\end{definition}
\begin{definition}[\cite{Art66}]
For a given exceptional $\{ E_i \}_{i\in I}$, define the fundamental
cycle $Z_f=\sum_{i\in I}r_i E_i$ (with each $r_i\geq 1$) to
be the unique smallest element such that $Z_f\cdot E_i\leq 0$ for all 
$i\in I$.
\end{definition}
Our notational convention for writing $Z_f$ is as follows:
\begin{example}\label{fundamental_cycle_examples}
\[
\begin{tikzpicture}[yscale=1.2]
\node at (-3,0) {(i)};
\node at (0,0) {\begin{tikzpicture}[xscale=0.85,yscale=1]
 \node (0) at (0,0) [vertex] {};
 \node (1) at (1,0) [vertex] {};
 \node (1b) at (1,0.75) [vertex] {};
 \node (2) at (2,0) [vertex] {};
 \node (3) at (3,0) [vertex] {};
 \node (4) at (4,0) [vertex] {};
 \node (5) at (5,0)[vertex] {};
 \node (0a) at (-0.1,-0.25) {$\scriptstyle - 2$};
 \node (1a) at (0.9,-0.25) {$\scriptstyle -2$};
 \node (1ba) at (0.65,0.75) {$\scriptstyle - 2$};
 \node (2a) at (1.9,-0.25) {$\scriptstyle - 2$};
 \node (2a) at (2.9,-0.25) {$\scriptstyle - 5$};
 \node (4a) at (3.9,-0.25) {$\scriptstyle -2$};
 \node (5a) at (4.9,-0.25) {$\scriptstyle - 3$};
\draw [-] (0) -- (1);
\draw [-] (1) -- (2);
\draw [-] (2) -- (3);
\draw [-] (3) -- (4);
\draw [-] (4) -- (5);
\draw [-] (1) -- (1b);
\end{tikzpicture}};
\node at (3.5,0) {$\scriptstyle Z_{f}=$};
\node at (6,0.1)
{\begin{tikzpicture}[xscale=0.75,yscale=1]
 \node (0) at (0,0)  {$\scriptstyle 1$};
 \node (1) at (1,0)  {$\scriptstyle 2$};
 \node (1b) at (1,0.75)  {$\scriptstyle 1$};
 \node (2) at (2,0)  {$\scriptstyle 2$};
 \node (3) at (3,0) {$\scriptstyle 1$};
 \node (4) at (4,0)  {$\scriptstyle 1$};
 \node (5) at (5,0) {$\scriptstyle 1$};
\end{tikzpicture}};
%%%%%%%%%%%%
\node at (-3,-1.5) {(ii)};
\node at (0,-1.5) {\begin{tikzpicture}[xscale=0.85,yscale=1]
 \node (0) at (0,0) [vertex] {};
 \node (1) at (1,0) [vertex] {};
 \node (1b) at (2,0.75) [vertex] {};
 \node (2) at (2,0) [vertex] {};
 \node (3) at (3,0) [vertex] {};
 \node (4) at (4,0) [vertex] {};
 \node (0a) at (-0.1,-0.25) {$\scriptstyle - 2$};
 \node (1a) at (0.9,-0.25) {$\scriptstyle -2$};
 \node (1ba) at (1.65,0.75) {$\scriptstyle - 2$};
 \node (2a) at (1.9,-0.25) {$\scriptstyle - 2$};
 \node (2a) at (2.9,-0.25) {$\scriptstyle - 3$};
 \node (4a) at (3.9,-0.25) {$\scriptstyle -2$};
\draw [-] (0) -- (1);
\draw [-] (1) -- (2);
\draw [-] (2) -- (3);
\draw [-] (3) -- (4);
\draw [-] (2) -- (1b);
\end{tikzpicture}};
\node at (3.5,-1.5) {$\scriptstyle Z_{f}=$};
\node at (6,-1.4)
{\begin{tikzpicture}[xscale=0.75,yscale=1]
 \node (0) at (0,0)  {$\scriptstyle 1$};
 \node (1) at (1,0)  {$\scriptstyle 2$};
 \node (1b) at (2,0.75)  {$\scriptstyle 1$};
 \node (2) at (2,0)  {$\scriptstyle 2$};
 \node (3) at (3,0) {$\scriptstyle 1$};
 \node (4) at (4,0)  {$\scriptstyle 1$};
\end{tikzpicture}};
\end{tikzpicture}
\]\end{example}

As is standard, we denote the canonical cycle by $Z_K$.  It is the rational cycle defined by the condition $Z_K\cdot E_i=-K_{\w{X}}\cdot E_i$ for all $i\in I$.  By adjunction this means that $Z_K\cdot E_i=E_i^2+2$ for all $i\in I$.  Note that if the resolution is minimal then $Z_K\cdot E_i\leq 0$ for all $i\in I$.

Now perverse sheaves were introduced by Bridgeland \cite{Bridgeland} to prove the existence of flops of certain 3-folds; here we use this theory for surfaces.  The key point from our perspective is the following commutative diagram, proved by Van den Bergh  \cite[3.2.8, 3.5.5]{VdB}
\[
\xymatrix@R=3pt@C=20pt{{\Db(\coh\w{X}})\ar[r]^(0.32){\approx}& \Db(\mod\End_{\w{X}}(\c{O}_{\w{X}}\oplus \c{M}_{I}))\\ \cup & \cup\\ {\Per}\ar[r]^(0.35){\approx}&\mod\End_{\w{X}}(\c{O}_{\w{X}}\oplus \c{M}_{I})}
\]
where $\c{M}_{I}=\bigoplus_{i\in I} \c{M}_{i}$ with each $\c{M}_{i}$ a certain vector bundle satisfying $\det\c{M}_{i}\cdot E_{j}=\delta_{ij}$.  Furthermore the simple modules in $\mod \End_{\w{X}}(\c{O}_{\w{X}}\oplus \c{M}_{I})$ are, viewed inside $\Db(\coh \w{X})$, precisely \cite[3.5.7]{VdB}
\[
\c{O}_{Z_f} \quad\t{and}\quad \c{O}_{E_i}(-1)[1] \t{ for all } i\in I.
\]
The same set of objects also appears in the work of Ishii \cite{Ishii} for quotient singularities as the homology of the lift of the AR sequences to the minimal resolution.  Since we are assuming $R$ is complete, for each exceptional curve $E_i$ there is a divisor $D_i$ intersecting $E_i$ transversally at one point and not intersecting any of the other exceptional curves (see \cite[3.4.4]{VdB}).  Thus defining $D:=\sum_{i\in I}D_i$ then the simples are $\c{O}_{Z_f}$ and $\c{O}_{E_i}(-D)[1]$ for all $i\in I$.

\section{Ext Groups and Corollaries}\label{Section:Exts}

When working with quivers by $ab$ we mean $a$ followed by $b$; similarly when composing morphisms by $fg$ we mean $f$ followed by $g$.  With these conventions representations of quivers correspond to right modules, and further the functor $\t{RHom}(\c{O}_{\w{X}}\oplus \c{M}_{I},-)$ takes us to the derived category of left $\End_{\w{X}}(\c{O}_{\w{X}}\oplus \c{M}_{I})$ modules.  In what follows denote the simples in $\Per$ (i.e. left $\End_{\w{X}}(\c{O}_{\w{X}}\oplus \c{M}_{I})$-modules) by $S_\star=\c{O}_{Z_f}$, and $S_i=\c{O}_{E_i}(-D)[1]$ for all $i\in I$.

The following is immediate from the definition of $Z_f$:
\begin{lemma}\label{inequality}
If $E_i$ is some exceptional curve then $E_i^2\leq Z_f\cdot E_i$.  Furthermore the inequality is strict if $E_i$ intersects some other exceptional curve.
\end{lemma}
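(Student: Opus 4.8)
The plan is to rephrase the desired inequality as $(Z_f-E_i)\cdot E_i\ge 0$ and to exploit the minimality built into the definition of $Z_f$. Writing $Z_f=\sum_j r_j E_j$, the cycle $Z':=Z_f-E_i$ is effective because $r_i\ge 1$, and since $Z_f\cdot E_i-E_i^2=Z'\cdot E_i$, the entire lemma reduces to controlling the sign of $Z'\cdot E_i$.

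First I would record the easy observation that for $j\neq i$ the cycle $Z'$ automatically satisfies the defining inequality of the fundamental cycle: indeed $Z'\cdot E_j=Z_f\cdot E_j-E_i\cdot E_j$, and both $Z_f\cdot E_j\le 0$ (definition of $Z_f$) and $E_i\cdot E_j\ge 0$ (distinct exceptional curves meet nonnegatively), so $Z'\cdot E_j\le 0$ for every $j\neq i$. Thus the only curve against which $Z'$ can possibly violate the condition $(-)\cdot E_j\le 0$ is $E_i$ itself.

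Next I would split on the multiplicity $r_i$. If $r_i\ge 2$, then $Z'$ still has all coefficients $\ge 1$ and is a strictly smaller competitor than $Z_f$ in the class of cycles defining the fundamental cycle; by minimality $Z'$ must fail the defining inequality somewhere, and by the previous paragraph this forced failure can only occur at $E_i$, giving $Z'\cdot E_i>0$, i.e. the strict inequality $E_i^2<Z_f\cdot E_i$ (and $r_i\ge 2$ forces $E_i$ to meet another curve, since otherwise $E_i$ would be its own connected component with fundamental cycle $E_i$ of multiplicity $1$, matching the claimed strictness). If $r_i=1$, the negative term disappears and I argue directly: $Z'\cdot E_i=\sum_{j\neq i}r_j\,(E_j\cdot E_i)$ is a sum of nonnegative terms, hence $\ge 0$, and it is strictly positive precisely when some $E_j$ with $j\neq i$ meets $E_i$.

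The one point requiring care — and the only place the argument is not a pure sign count — is the case $r_i\ge 2$: there the term $(r_i-1)E_i^2$ is negative, so naive nonnegativity fails and one genuinely needs the minimality of $Z_f$ rather than the effectivity of $Z'$ alone. A cleaner alternative that dispenses with the case division is to first prove that any nonzero effective cycle $Z$ with $Z\cdot E_j\le 0$ for all $j$ automatically has full support (if $Z$ vanishes at a vertex $E_i$ adjacent to its support, then $Z\cdot E_i>0$, a contradiction, using connectedness of the exceptional locus); then $Z_f$ is the smallest such cycle without assuming full support, and applying the minimality argument to $Z'=Z_f-E_i$ uniformly yields $Z'\cdot E_i>0$ whenever $Z'\neq 0$, i.e. whenever $E_i$ meets another curve, with equality $Z_f=E_i$ occurring exactly in the one-curve case.
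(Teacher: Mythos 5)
Your argument is correct and is essentially the paper's own proof: the same case split on the multiplicity $r_i$ of $E_i$ in $Z_f$, the same direct sign count $Z_f\cdot E_i=E_i^2+\sum_{j\neq i}r_jE_j\cdot E_i$ when $r_i=1$, and the same appeal to minimality of $Z_f$ applied to $Z'=Z_f-E_i$ when $r_i\geq 2$. Your additional remarks (that the forced failure of $Z'\cdot E_j\leq 0$ can only occur at $j=i$, and that $r_i\geq 2$ already forces $E_i$ to meet another curve) merely make explicit what the paper leaves implicit.
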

\begin{proof}
Write $Z_f=\sum a_iE_i$.  If $a_i=1$ then 
\[
Z_f\cdot E_i=E_i^2+\sum_{i\neq j}a_jE_j\cdot E_i\geq E_i^2
\]
where the inequality is strict provided that $E_i\cdot E_j=1$ for some $j$.  If $a_i>1$ write $Z^\prime =Z_f-E_i=(a_i-1)E_i+\sum_{i\neq j}a_jE_j$ then $Z^\prime$ has all co-efficients $\geq 1$ with $Z^\prime\cdot E_j=(Z_f-E_i)\cdot E_j\leq 0$ for all $j\neq i$.  Further $Z^\prime\cdot E_i=Z_f\cdot E_i-E_i^2$, thus $Z_f\cdot E_i>E_i^2$ else $Z_f$ is not minimal. 
\end{proof}

Let $e$ denote the embedding dimension of $R$, i.e. $e-2=-Z_K\cdot Z_f+1=-1-Z_f\cdot Z_f$.  In what follows, for every integer $a\in\mathbb{Z}$ denote
\[
 a_+:=\left\{\begin{array}{ccc}a& \t{if}& a\geq 0\\ 0 & \t{if}&a<0\end{array}\right. \quad \t{and}\quad a_-=\left\{\begin{array}{rcc}0& \t{if} &a\geq 0\\ -a & \t{if}&a<0\end{array}\right. .
 \]

\begin{theorem}\label{extcalculation}
Let $\w{X}\rightarrow \Spec R$ be some resolution of an affine complete rational surface singularity.  Then the dimension of the Ext groups between the simples in $\Per$ are
\[
\begin{array}{lll}
\scriptstyle \ext^1(S_i,S_j)=(E_i\cdot E_j)_+ &\scriptstyle\ext^2(S_i,S_j)=(-1-E_i\cdot E_j)_+&\scriptstyle\ext^3(S_i,S_j)=0\\
\scriptstyle\ext^1(S_\star,S_\star)=0 &\scriptstyle\ext^2(S_\star,S_\star)=e-2&\scriptstyle\ext^3(S_\star,S_\star)=0\\
\scriptstyle\ext^1(S_\star,S_i)=-E_i\cdot Z_f&\scriptstyle\ext^2(S_\star,S_i)=0&\scriptstyle\ext^3(S_\star,S_i)=0\\
\scriptstyle\ext^1(S_i,S_\star)={\scriptsize\left\{  \begin{array}{c} ((Z_K-Z_f)\cdot E_i)_+\\ 1-Z_f\cdot E_i\end{array}\right.}&\scriptstyle\ext^2(S_i,S_\star)={\scriptsize\left\{  \begin{array}{c} ((Z_K-Z_f)\cdot E_i)_-\\ 1\end{array}\right.}&\scriptstyle\ext^3(S_i,S_\star)={\scriptsize\left\{  \begin{array}{c} -E_i^2-2\\ 0\end{array}\right.}
\end{array}
\]
where in the split for $\ext^t(S_i,S_\star)$ the bottom option corresponds to when $E_i$ is a {\rm ($-1$)}-curve, the top option when $E_i$ is a non-{\rm ($-1$)}-curve. All higher ext groups are zero.
\end{theorem}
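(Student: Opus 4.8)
The plan is to compute everything inside $\Db(\coh\w{X})$, where $\Per$ sits as the heart, and to reduce each group to the cohomology of line bundles on curves. First I would strip the shifts: since $S_i=\c{O}_{E_i}(-D)[1]$ and $S_\star=\c{O}_{Z_f}$, one has $\ext^t(S_i,S_j)=\Ext^t(\c{O}_{E_i},\c{O}_{E_j})$ (the common twist by the global invertible sheaf $\c{O}(-D)$ and the two shifts cancel), $\ext^t(S_\star,S_i)=\Ext^{t+1}(\c{O}_{Z_f},\c{O}_{E_i}(-1))$, $\ext^t(S_i,S_\star)=\Ext^{t-1}(\c{O}_{E_i}(-1),\c{O}_{Z_f})$, and $\ext^t(S_\star,S_\star)=\Ext^t(\c{O}_{Z_f},\c{O}_{Z_f})$, where $\c{O}_{E_i}(-1)$ denotes $\c{O}_{E_i}(-D)$, a line bundle of degree $-1$ on $E_i$. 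The two engines are (i) the Koszul resolution $0\to\c{O}(-C)\to\c{O}\to\c{O}_C\to 0$ for $C=E_i$ or $C=Z_f$, which turns $R\mathcal{H}om(\c{O}_C,-)$ into an explicit two-term complex, and (ii) relative Serre duality on the smooth surface $\w{X}$, namely $\Ext^t(A,B)\cong\Ext^{2-t}(B,A\otimes\omega_{\w{X}})^*$, which is available because all the objects involved have support proper over the complete local base $R$. The only place the canonical cycle enters is through $\omega_{\w{X}}\cdot E_i=K_{\w{X}}\cdot E_i=-Z_K\cdot E_i$.

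The computational core is a single calculation of $\Ext^\bullet(\c{O}_{Z_f},\c{O}_{E_i}(m))$ for a general twist $m$. Resolving the source by the Koszul complex, the connecting differential is multiplication by the section cutting out $Z_f$; since $E_i\subseteq Z_f$ this section restricts to zero on $E_i$, so the differential vanishes and the local $\Ext$-sheaves are simply $\mathcal{E}xt^0=\c{O}_{E_i}(m)$ and $\mathcal{E}xt^1=\c{O}_{E_i}(m+Z_f\cdot E_i)$, with $\mathcal{E}xt^{\geq2}=0$. Because these are supported on a curve, the local-to-global spectral sequence $H^p(\mathcal{E}xt^q)\Rightarrow\Ext^{p+q}$ degenerates on the second page, and since $E_i\cong\P$ everything reduces to Riemann--Roch: $h^0(\c{O}_{\P}(d))=(d+1)_+$ and $h^1(\c{O}_{\P}(d))=(-d-1)_+$. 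Setting $m=-1$ reads off the $\ext^\bullet(S_\star,S_i)$ row, giving $\ext^1(S_\star,S_i)=(-Z_f\cdot E_i)_+=-Z_f\cdot E_i$ and the vanishing of the rest. Applying Serre duality to the opposite direction replaces $\c{O}_{E_i}(-1)$ by $\c{O}_{E_i}(-1)\otimes\omega_{\w{X}}=\c{O}_{E_i}(-1-Z_K\cdot E_i)$, i.e. the same master computation with $m=-1-Z_K\cdot E_i$; dualising its three cohomology groups produces the entire $\ext^\bullet(S_i,S_\star)$ column, with $((Z_K-Z_f)\cdot E_i)_+$, then $((Z_K-Z_f)\cdot E_i)_-+(Z_K\cdot E_i)_+$, then $(-Z_K\cdot E_i)_+$ appearing in degrees $1,2,3$. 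The curve--curve row is the same mechanism one dimension down: for $i\neq j$ the source section cuts $E_i$ transversally, so $\mathcal{E}xt^1(\c{O}_{E_i},\c{O}_{E_j})$ is a length-$(E_i\cdot E_j)$ skyscraper, giving $\ext^1=(E_i\cdot E_j)_+$ and $\ext^2=0$, while for $i=j$ the relevant sheaf is the normal bundle $\c{O}_{E_i}(E_i)$ of degree $E_i^2$, so $\ext^1=(E_i^2)_+=0$ and $\ext^2=h^1(\c{O}_{E_i}(E_i))=(-1-E_i^2)_+$.

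The remaining entry $\ext^\bullet(S_\star,S_\star)=\Ext^\bullet(\c{O}_{Z_f},\c{O}_{Z_f})$ runs along the same lines, with $\mathcal{E}xt^0=\c{O}_{Z_f}$ and $\mathcal{E}xt^1=N_{Z_f}=\c{O}_{Z_f}(Z_f)$, except that the cohomology now lives on $Z_f$ rather than on a $\P$, so one invokes Riemann--Roch on the fundamental cycle. Rationality of $R$ supplies $h^1(\c{O}_{Z_f})=0$ and $p_a(Z_f)=0$, whence $\chi(N_{Z_f})=Z_f^2+\chi(\c{O}_{Z_f})=Z_f^2+1$; together with the vanishing $h^0(N_{Z_f})=0$ (valid since $Z_f\cdot E_i\leq0$ on every component and $Z_f$ is the fundamental cycle) this forces $\ext^1(S_\star,S_\star)=h^0(N_{Z_f})+h^1(\c{O}_{Z_f})=0$ and $\ext^2(S_\star,S_\star)=h^1(N_{Z_f})=-1-Z_f^2=e-2$. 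The vanishing of higher Ext is automatic: $\w{X}$ is a smooth surface, so sheaf $\Ext$ vanishes above degree $2$ and cohomology above degree $1$ on these curve supports, and after accounting for the shifts this kills every $\ext^{\geq3}$ with the sole exception of $\ext^3(S_i,S_\star)=\Ext^2(\c{O}_{E_i}(-1),\c{O}_{Z_f})$, whose value $(-Z_K\cdot E_i)_+$ is exactly the top end of the Serre-dual computation above.

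The step I expect to give the most trouble is the correct deployment of Serre duality in this non-proper relative situation: one must justify $\Ext^t(A,B)\cong\Ext^{2-t}(B,A\otimes\omega_{\w{X}})^*$ via Grothendieck duality for the proper birational morphism $f$ over the complete local ring $R$, with relative dualizing complex $\omega_{\w{X}}[2]$. The second genuine difficulty is the pair of rationality inputs $h^1(\c{O}_{Z_f})=0$ and $h^0(\c{O}_{Z_f}(Z_f))=0$ for the non-reduced fundamental cycle, which underlie the $S_\star$-with-itself entry and rest on Artin's theory of the fundamental cycle rather than on a naive degree count (note $Z_f\cdot E_i$ can vanish on some components). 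Finally, the $(-1)$-curve case must be handled separately: there $\deg(\omega_{\w{X}}|_{E_i})=-Z_K\cdot E_i=-1<0$, which flips which of $H^0$ and $H^1$ survives and is precisely the source of the extra term $(Z_K\cdot E_i)_+$ and of the two-line splits recorded in the $\ext^\bullet(S_i,S_\star)$ column; tracking the multiplicities $r_i\geq1$ when restricting $\c{O}_{Z_f}$ to individual components is a bookkeeping hazard but poses no conceptual obstacle.
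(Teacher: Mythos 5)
Your proposal is correct, and it reaches every entry of the table by a route that is genuinely leaner than the paper's, so a comparison is worthwhile. The paper works with long exact sequences of global Ext groups obtained by applying $\Hom(-,-)$ to the Koszul sequences, and as a result must resolve several ambiguous connecting maps by hand: for $\ext^\bullet(S_i,S_\star)$ it splits into the $(-1)$- and non-$(-1)$-curve cases, exhibits an explicit $(-Z_K\cdot E_i)$-dimensional space of maps $\c{O}_{Z_f}\to\c{O}_{E_i}(-1-Z_K\cdot E_i)$ to turn an a priori inequality into an equality, and in the $(-1)$-case constructs a nonzero map $\c{O}_{E_i}(-1)\to\c{O}_{Z_f}$ from the subcycle sequence for $Z_f-E_i$. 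Your organisation through the local Ext sheaves $\c{E}xt^0=\c{O}_{E_i}(m)$ and $\c{E}xt^1=\c{O}_{E_i}(m+Z_f\cdot E_i)$ (the Koszul differential dies because $E_i\leq Z_f$) together with the local-to-global spectral sequence, which degenerates since $\H^{\geq 2}$ vanishes on curve supports, removes all of this: your uniform answer $((Z_K-Z_f)\cdot E_i)_+$, $((Z_K-Z_f)\cdot E_i)_-+(Z_K\cdot E_i)_+$, $(-Z_K\cdot E_i)_+$ in degrees $1,2,3$ specialises correctly to both lines of the stated split (for a non-$(-1)$-curve $Z_K\cdot E_i\leq 0$; for a $(-1)$-curve $Z_K\cdot E_i=1$ and $((Z_K-Z_f)\cdot E_i)_-=0$), with no case analysis, no explicit maps, and it even covers the single-$(-1)$-curve configuration the paper dismisses as well known. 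The Serre duality you flag as the delicate step is exactly the form the paper itself invokes, so it costs nothing extra. The one place your sketch is genuinely thinner than the paper is the vanishing $\h^0(\c{O}_{Z_f}(Z_f))=0$: non-positive degrees on all components do not by themselves kill sections of a line bundle on a non-reduced cycle (witness $\c{O}_{Z_f}$ itself), so you need either Artin's computation-sequence argument started at a component with $Z_f\cdot E_i<0$ (one exists since $Z_f^2<0$), or the paper's cleaner route via Serre duality on $Z_f$ and Grauert--Riemenschneider, which gives $\h^0(\c{O}_{Z_f}(Z_f))=\h^1(\c{O}_{Z_f}(-Z_K))=0$; you acknowledge the issue but should supply one of these arguments.
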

\begin{proof}
We start by computing $\ext^t(S_i,S_j)=\ext^t(\c{O}_{E_i},\c{O}_{E_j})$.  Taking the short exact sequence
\[
0\to \c{O}_{\w{X}}(-E_i)\to \c{O}_{\w{X}}\to\c{O}_{E_i}\to 0
\]
and applying $\Hom(-,\c{O}_{E_j})$ gives
%\begin{multline*}
%0\to
%\Hom(\c{O}_{E_i},\c{O}_{E_j})\to
%\H^0(\c{O}_{E_j})\to
%\H^0(\c{O}_{E_j}(E_i\cdot E_j))\\
%\to \Ext^1(\c{O}_{E_i},\c{O}_{E_j})\to
%\H^1(\c{O}_{E_j})=0
%\end{multline*} 
\[
0\to
\Hom(\c{O}_{E_i},\c{O}_{E_j})\to
\H^0(\c{O}_{E_j})\to
\H^0(\c{O}_{E_j}(E_i\cdot E_j))
\to \Ext^1(\c{O}_{E_i},\c{O}_{E_j})\to
\H^1(\c{O}_{E_j})=0
\]
and 
\[
0=\H^1(\c{O}_{E_j})\to \H^1(\c{O}_{E_j}(E_i\cdot E_j))\to \Ext^2(\c{O}_{E_i},\c{O}_{E_j})\to \H^2(\c{O}_{E_j})=0
\]
with $\ext^t(\c{O}_{E_i},\c{O}_{E_j})=0$ for all $t\geq 3$.  From the second exact sequence
\[
\ext^2(\c{O}_{E_i},\c{O}_{E_j})=\h^1(\c{O}_{E_j}(E_i\cdot E_j))= \left\{\begin{array}{cc}0 & i\neq j\\ -1-E_i\cdot E_i & i=j.\end{array}\right. 
\]
From the first exact sequence, if $i=j$ then $\H^0(\c{O}_{E_i}(E_i\cdot E_j))=0$ forces $\ext^1(\c{O}_{E_i},\c{O}_{E_j})=0$.  If $i\neq j$ then $\hom(\c{O}_{E_i},\c{O}_{E_j})=0$ and so 
\[
\ext^1(\c{O}_{E_i},\c{O}_{E_j})= \h^0(\c{O}_{E_j}(E_i\cdot E_j))-\h^0(\c{O}_{E_j})=E_i\cdot E_j.
\]
We now compute  $\ext^t(S_\star,S_\star)=\ext^t(\c{O}_{Z_f},\c{O}_{Z_f})$.  First note that $\h^1(\c{O}_{Z_f}(-Z_K))=0$ follows immediately from taking sections of the sequence
\[
0\to \c{O}_{\w{X}}(-Z_f-Z_K)\to \omega\to\c{O}_{Z_f}(-Z_K)\to 0,
\]
since $\H^1(\omega)=0$ by Grauert-Riemenschneider vanishing.  Further we know by \cite[3.4]{Art66} that $\h^0(\c{O}_{Z_f})=1$ and $\h^1(\c{O}_{Z_f})=0$.  Thus applying $\Hom(-,\c{O}_{Z_f})$ to the short exact sequence
\[
0\to \c{O}_{\w{X}}(-Z_f)\to \c{O}_{\w{X}}\to\c{O}_{Z_f}\to 0
\]
gives
\[
0\to \Hom(\c{O}_{Z_f},\c{O}_{Z_f})\to \H^0(\c{O}_{Z_f})\to \H^0(\c{O}_{Z_f}(Z_f))\to \Ext^1(\c{O}_{Z_f},\c{O}_{Z_f}) \to \H^1(\c{O}_{Z_f})=0
\]
%\begin{multline*}
%0\to \Hom(\c{O}_{Z_f},\c{O}_{Z_f})\to \H^0(\c{O}_{Z_f})\to \H^0(\c{O}_{Z_f}(Z_f))\to \Ext^1(\c{O}_{Z_f},\c{O}_{Z_f})\\ \to \H^1(\c{O}_{Z_f})=0
%\end{multline*}
and 
\[
0=\H^1(\c{O}_{Z_f})\to \H^1(\c{O}_{Z_f}(Z_f))\to \Ext^2(\c{O}_{Z_f},\c{O}_{Z_f})\to \H^2(\c{O}_{Z_f})=0
\]
with $\ext^t(\c{O}_{Z_f},\c{O}_{Z_f})=0$ for all $t\geq 3$.  By Serre duality on $Z_f$ we know that $\h^0(\c{O}_{Z_f}(Z_f))=\h^1(\c{O}_{Z_f}(-Z_K))=0$ and so the first exact sequence shows that $\ext^1(\c{O}_{Z_f},\c{O}_{Z_f})=0$.  Now using Serre duality on $Z_f$ the second short exact sequence gives
\[
\ext^2(\c{O}_{Z_f},\c{O}_{Z_f})=\h^1(\c{O}_{Z_f}(Z_f))=\h^0(\c{O}_{Z_f}(-Z_K)).
\]
But  
\[
\h^0(\c{O}_{Z_f}(-Z_K))=\h^0(\c{O}_{Z_f}(-Z_K))-\h^1(\c{O}_{Z_f}(-Z_K))=\chi_{}(\c{O}_{Z_f}(-Z_K))
\]
and by Riemann--Roch on $Z_f$  
\[
\chi_{}(\c{O}_{Z_f}(-Z_K))=-Z_f\cdot Z_K+1=-1-Z_f\cdot Z_f
\] 
as required.\\
Now consider $\ext^t(S_\star,S_i)$.  Due to the shift in the simples, $\ext^t(S_\star,S_i)=\ext^{t+1}(\c{O}_{Z_f},\c{O}_{E_i}(-1))$.  Simply applying $\Hom(-,\c{O}_{E_i}(-1))$ to the short exact sequence
\[
0\to \c{O}_{\w{X}}(-Z_f)\to \c{O}_{\w{X}}\to\c{O}_{Z_f}\to 0
\]
gives 
\[
0=\H^1(\c{O}_{E_i}(-1))\to \H^1(\c{O}_{E_i}(-1+E_i\cdot Z_f))\to \Ext^2(\c{O}_{Z_f},\c{O}_{E_i}(-1)) \to \H^2(\c{O}_{E_i}(-1))=0
\]
%\begin{multline*}
%0=\H^1(\c{O}_{E_i}(-1))\to \H^1(\c{O}_{E_i}(-1+E_i\cdot Z_f))\to \Ext^2(\c{O}_{Z_f},\c{O}_{E_i}(-1))\\ \to \H^2(\c{O}_{E_i}(-1))=0
%\end{multline*}
and
\[
\to \H^2(\c{O}_{E_i}(-1+E_i\cdot Z_f))=0\to \Ext^3(\c{O}_{Z_f},\c{O}_{E_i}(-1))\to \H^3(\c{O}_{E_i}(-1))=0.
\]
with all higher ext groups vanishing.  From the second sequence $\ext^2(S_\star,S_i)=0$,  whereas the first shows that
\[
 \ext^1(S_\star,S_i)=\ext^2(\c{O}_{Z_f},\c{O}_{E_i}(-1))=\h^1(\c{O}_{E_i}(-1+E_i\cdot Z_f))
\]
which equals $\h^0(\c{O}_{E_i}(-1-E_i\cdot Z_f))=-E_i\cdot Z_f$.\\
Finally we consider $\ext^t(S_i,S_\star)=\ext^{t-1}(\c{O}_{E_i}(-1),\c{O}_{Z_f})$.  Applying $\Hom(-,\c{O}_{Z_f})$ to the exact sequence
\[
0\to \c{O}_{\w{X}}(-E_i-D_i)\to \c{O}_{\w{X}}(-D_i)\to\c{O}_{E_i}(-1)\to 0
\]
gives
\begin{multline*}
0\to \Hom(\c{O}_{E_i}(-1),\c{O}_{Z_f})\to \H^0(\c{O}_{Z_f}(D_i))\to \H^0(\c{O}_{Z_f}(D_i+E_i)) \to \Ext^1(\c{O}_{E_i}(-1),\c{O}_{Z_f}) \\ \to \H^1(\c{O}_{Z_f}(D_i))\to \H^1(\c{O}_{Z_f}(D_i+E_i)) \to \Ext^2(\c{O}_{E_i}(-1),\c{O}_{Z_f})\to 0
\end{multline*}
%\begin{multline*}
%0\to \Hom(\c{O}_{E_i}(-1),\c{O}_{Z_f})\to \H^0(\c{O}_{Z_f}(D_i))\to \H^0(\c{O}_{Z_f}(D_i+E_i))\\ \to \Ext^1(\c{O}_{E_i}(-1),\c{O}_{Z_f}) \to \H^1(\c{O}_{Z_f}(D_i))\to \H^1(\c{O}_{Z_f}(D_i+E_i))\\ \to \Ext^2(\c{O}_{E_i}(-1),\c{O}_{Z_f})\to 0
%\end{multline*}
with all higher terms zero.  By summing dimensions
\begin{multline}
\ext^1(S_i,S_\star)-\ext^2(S_i,S_\star)+\ext^3(S_i,S_\star)=\chi (\c{O}_{Z_f}(D_i))-\chi (\c{O}_{Z_f}(E_i+D_i)) =-E_i\cdot Z_f.\label{altsum}
\end{multline}
We now split into cases --- firstly assume that $E_i$ is not a ($-1$)-curve.  By Serre duality $\ext^1(S_i,S_\star)$ and $\ext^2(S_i,S_\star)$ are 
\[
\ext^2(\c{O}_{Z_f},\c{O}_{E_i}(-1-Z_K\cdot E_i))\quad \t{and}\quad \ext^1(\c{O}_{Z_f},\c{O}_{E_i}(-1-Z_K\cdot E_i)) 
\]
respectively.  Now applying $\Hom(-,\c{O}_{E_i}(-1-Z_K\cdot E_i))$ to the short exact sequence
\[
0\to \c{O}_{\w{X}}(-Z_f)\to \c{O}_{\w{X}}\to\c{O}_{Z_f}\to 0
\]
gives
\begin{multline*}
0\to \Hom(\c{O}_{Z_f},\c{O}_{E_i}(-1-Z_K\cdot E_i))\to \H^0(\c{O}_{E_i}(-1-Z_K\cdot E_i)) \to \H^0(\c{O}_{E_i}(-1+(Z_f-Z_K)\cdot E_i))\\ \to \Ext^1(\c{O}_{Z_f},\c{O}_{E_i}(-1-Z_K\cdot E_i)) \to \H^1(\c{O}_{E_i}(-1-Z_K\cdot E_i))=0
\end{multline*}
and
\begin{multline*}
0=\H^1(\c{O}_{E_i}(-1-Z_K\cdot E_i))\to \H^1(\c{O}_{E_i}(-1+(Z_f-Z_K)\cdot E_i))\to \Ext^2(\c{O}_{Z_f},\c{O}_{E_i}(-1-Z_K\cdot E_i))\to 0
\end{multline*}
since $-Z_K\cdot E_i\geq 0$ ($E_i$ is not a ($-1$)-curve). The second exact sequence shows that 
\begin{multline*}
\ext^2(\c{O}_{Z_f},\c{O}_{E_i}(-1-Z_K\cdot E_i))=\h^1(\c{O}_{E_i}(-1+(Z_f-Z_K)\cdot E_i)) =((Z_K-Z_f)\cdot E_i)_+.
\end{multline*}
and by the first exact sequence we have a surjection
\[
\C{((Z_K-Z_f)\cdot E_i)_-}\to \Ext^1(\c{O}_{Z_f},\c{O}_{E_i}(-1-Z_K\cdot E_i))\to 0
\]
which shows that $\ext^2(S_i,S_\star)\leq((Z_K-Z_f)\cdot E_i)_-$.  Thus if $(Z_K-Z_f)\cdot E_i\geq 0$ then $\ext^2(S_i,S_\star)=0$, so we may assume that $(Z_K-Z_f)\cdot E_i< 0$ in which case (by definition of $Z_f$) necessarily we must have $Z_K\cdot E_i\leq -1$.  This means that the first exact sequence reduces to  
\begin{multline*}
0\to \Hom(\c{O}_{Z_f},\c{O}_{E_i}(-1-Z_K\cdot E_i))\to \C{-Z_K\cdot E_i}\to \C{((Z_K-Z_f)\cdot E_i)_-}\\ \to \Ext^1(\c{O}_{Z_f},\c{O}_{E_i}(-1-Z_K\cdot E_i))\to 0.
\end{multline*}
But composing the surjection $\c{O}_{Z_f}\twoheadrightarrow\c{O}_{E_i}$ with a basis of $\Hom(\c{O}_{E_i},\c{O}_{E_i}(-1-Z_K\cdot E_i))$ (which has dimension $-Z_K\cdot E_i$) 
%PROOF
%Apply $\Hom(-,\c{O}_{E_i}(-1-Z_K\cdot E_i))$ to the exact sequence
%\[
%0\to \c{O}_{\w{X}}(-E_i)\to \c{O}_{\w{X}}\to\c{O}_{E_i}\to 0
%\]
%gives
%\[
%0\to \Hom(\c{O}_{E_i},\c{O}_{E_i}(-1-Z_K\cdot E_i))\to H^0(\c{O}_{E_i}(-1-Z_K\cdot E_i))\to H^0(\c{O}_{E_i}(-1-Z_K\cdot E_i+E_i^2))
%\]
%where $H^0(\c{O}_{E_i}(-1-Z_K\cdot E_i+E_i^2))=H^0(\c{O}_{E_i}(-1-E_i^2-2+E_i^2))=0$.  Thus 
%$hom(\c{O}_{E_i},\c{O}_{E_i}(-1-Z_K\cdot E_i))= h^0(\c{O}_{E_i}(-1-Z_K\cdot E_i))=-Z_K\cdot E_i$
we see that $\hom(\c{O}_{Z_f},\c{O}_{E_i}(-1-Z_K\cdot E_i))\geq -Z_K\cdot E_i$.  By the above exact sequence equality holds, and so by summing dimensions we conclude that
\[
\ext^1(\c{O}_{Z_f},\c{O}_{E_i}(-1-Z_K\cdot E_i))=((Z_K-Z_f)\cdot E_i)_-.
\]
By (\ref{altsum}) we thus obtain
\begin{multline*}
\ext^3(S_i,S_\star)=-Z_f\cdot E_i+((Z_K-Z_f)\cdot E_i)_--((Z_K-Z_f)\cdot E_i)_+ =-Z_K\cdot E_i=-E_i^2-2,
\end{multline*}
finishing the proof when $E_i$ is not a ($-1$)-curve. \\
Finally, consider the case where $E_i$ is a ($-1$)-curve, then the statement is well-known if $E_i$ is the only curve in the exceptional locus (in which case $Z_f\cdot E_i=E_i\cdot E_i=-1$), thus we may assume that $E_i$ intersects some other exceptional curve.  In this case $Z_f\cdot E_i=0$ by Lemma~\ref{inequality}.  Now by Serre duality $\ext^1(S_i,S_\star)$ and $\ext^2(S_i,S_\star)$ are 
\[
\ext^2(\c{O}_{Z_f},\c{O}_{E_i}(-2))\quad \t{and}\quad \ext^1(\c{O}_{Z_f},\c{O}_{E_i}(-2)) 
\]
respectively, so applying $\Hom(-,\c{O}_{E_i}(-2))$ to the exact sequence
\[
0\to \c{O}_{\w{X}}(-Z_f)\to \c{O}_{\w{X}}\to\c{O}_{Z_f}\to 0
\]
gives
\begin{multline*}
\H^0(\c{O}_{E_i}(-2+Z_f\cdot E_i))\to \Ext^1(\c{O}_{Z_f},\c{O}_{E_i}(-2))\to \H^1(\c{O}_{E_i}(-2))\to\\ \to  \H^1(\c{O}_{E_i}(-2+Z_f\cdot E_i))\to \Ext^2(\c{O}_{Z_f},\c{O}_{E_i}(-2)) \to 0,
\end{multline*}
which since $Z_f\cdot E_i=0$ is just
\[
0\to \Ext^1(\c{O}_{Z_f},\c{O}_{E_i}(-2))\to \C{}\to \C{}\to\Ext^2(\c{O}_{Z_f},\c{O}_{E_i}(-2)) \to 0.
\]
Thus $\ext^1(S_i,S_\star)=\ext^2(S_i,S_\star)\leq 1$.  To see that both are precisely one we exhibit a non-zero map from $\c{O}_{E_i}(-1)$ to $\c{O}_{Z_f}$, then $\ext^1(S_i,S_\star)=\hom(\c{O}_{E_i}(-1),\c{O}_{Z_f})\neq 0$.  But since $E_i$ intersects some other curve, $Z_f-E_i>0$ so there is an exact sequence
\[
0\to \c{O}_{E_i}(-(Z_f-E_i))\to\c{O}_{Z_f}\to\c{O}_{Z_f-E_i}\to 0
\]
which since $Z_f\cdot E_i=0$ is simply
\[
0\to \c{O}_{E_i}(-1)\to\c{O}_{Z_f}\to\c{O}_{Z_f-E_i}\to 0,
\]
providing us with the required non-zero map. Lastly by (\ref{altsum}) 
\[
\ext^3(S_i,S_\star)=-Z_f\cdot E_i+1-1=-Z_f\cdot E_i=0,
\]
finishing the proof.
\end{proof}

\begin{corollary}\label{main}
Let $\w{X}\rightarrow \Spec R$ be some resolution of an affine complete rational surface singularity.  Then $\End_{\w{X}}(\c{O}_{\w{X}}\oplus \c{M}_{I})$ can be written as a quiver with relations as follows: for every exceptional curve $E_i$ associate a vertex labelled $i$, and also associate a vertex $\star$ corresponding to $\c{O}_{\w{X}}$.  Then the number of arrows and relations between the vertices is given as follows:
\[
\begin{array}{c|cc}
&\mbox{Number of arrows}&\mbox{Number of relations}\\ \hline
i\rightarrow j&(E_i\cdot E_j)_+  & (-1-E_i\cdot E_j)_+\\
\star\rightarrow\star&0&-Z_K\cdot Z_f+1=-1-Z_f\cdot Z_f\\
i\rightarrow \star&-E_i\cdot Z_f&0\\
\star\rightarrow i&\left\{  \begin{array}{c} ((Z_K-Z_f)\cdot E_i)_+\\ 1-Z_f\cdot E_i\end{array}\right.&\left\{  \begin{array}{c} ((Z_K-Z_f)\cdot E_i)_-\\ 1\end{array}\right.
\end{array}
\]
where in the split for $\star\rightarrow i$, the bottom option corresponds to when $E_i$ is a {\rm ($-1$)}-curve, the top option when $E_i$ is a non-{\rm ($-1$)}-curve. 
\end{corollary}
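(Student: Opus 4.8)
The plan is to deduce the table entirely from Theorem~\ref{extcalculation} by invoking the standard correspondence between the minimal projective resolutions of the simple modules and a presentation of the algebra as a quiver with relations.

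First I would observe that $A:=\End_{\w{X}}(\c{O}_{\w{X}}\oplus \c{M}_{I})$ is basic, since its defining summands $\c{O}_{\w{X}}$ and the $\c{M}_{i}$ are pairwise non-isomorphic indecomposables, and that $A$ is a module-finite algebra over the complete local ring $R$, hence a semiperfect (indeed complete) Noetherian $\C{}$-algebra. Such an algebra has a well-defined quiver $Q$ and may be presented as the path algebra of $Q$ modulo an admissible ideal of relations, compatibly with the conventions fixed at the start of this section (under which right $A$-modules are representations of $Q$). Next, Van den Bergh's equivalence $\Db(\coh\w{X})\approx\Db(\mod A)$ restricts on hearts to $\Per\approx\mod A$, so the simple objects $S_\star$ and $S_i$ are identified with the simple left $A$-modules at the vertices $\star$ and $i$; being a triangle equivalence it gives $\ext^t(S_a,S_b)=\Ext^t_A(S_a,S_b)$ for all $t$, so the numbers computed in Theorem~\ref{extcalculation} are intrinsic to $A$.

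The key step is the homological dictionary. Forming the minimal projective resolution $\cdots\to P_2\to P_1\to P_0\to S_a\to 0$ of a simple left $A$-module, the multiplicity of the indecomposable projective at $b$ in $P_1$ (respectively $P_2$) equals $\dim_{\C{}}\Ext^1_A(S_a,S_b)$ (respectively $\dim_{\C{}}\Ext^2_A(S_a,S_b)$). Reading the top of $P_1$ off $(\operatorname{rad}A)/(\operatorname{rad}^2A)$ then shows that the number of arrows $a\to b$ in $Q$ equals $\dim_{\C{}}\Ext^1_A(S_b,S_a)=\ext^1(S_b,S_a)$ and, similarly, the number of relations $a\to b$ equals $\ext^2(S_b,S_a)$. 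The interchange of $a$ and $b$ is the only genuinely new point beyond Theorem~\ref{extcalculation}: it comes from realizing the simples as \emph{left} modules via $\t{RHom}(\c{O}_{\w{X}}\oplus\c{M}_{I},-)$. Substituting the values from Theorem~\ref{extcalculation} now yields every entry of the table; the $i\to j$ entries are unaffected by the interchange because $E_i\cdot E_j=E_j\cdot E_i$, the mixed entries $\star\to i$ and $i\to\star$ are precisely the interchanged Ext groups, and the number of $\star\to\star$ relations is $\ext^2(S_\star,S_\star)=e-2=-Z_K\cdot Z_f+1$ by the definition of the embedding dimension $e$.

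I expect the main obstacle to be conventional rather than computational: one must pin down the left/right bookkeeping carefully so that the orientation of arrows and relations matches the stated table, and one must check that the identity ``number of relations $=\dim\Ext^2$'' remains valid in our setting. The latter holds because, although $A$ need not be finite-dimensional over $\C{}$, it is complete and semiperfect and its simple modules have finite-dimensional $\Ext^1$ and $\Ext^2$ by Theorem~\ref{extcalculation}, so the minimal projective resolution exists and behaves exactly as for a finite-dimensional algebra.
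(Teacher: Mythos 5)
Your proposal is correct and follows essentially the same route as the paper: identify the objects of Theorem~\ref{extcalculation} with the simple \emph{left} $A$-modules, swap the order of the arguments to pass to right modules (i.e.\ representations of the quiver), and use the standard dictionary ``arrows $=\dim\Ext^1$, relations $=\dim\Ext^2$'', with completeness of $A$ invoked exactly where the paper cites \cite[3.4(b)]{BIKS} to justify the $\Ext^2$ statement. You simply spell out in more detail (basicness, semiperfectness, minimal projective resolutions) what the paper compresses into ``mostly well-known''.
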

\begin{proof}
Denote $A:=\End_{\w{X}}(\c{O}_{\w{X}}\oplus \c{M}_{I})$. In the conventions here right modules are the same as representations of quivers, so to write $A$ as a quiver with relations we need to take the simple right $A$-modules and calculate the dimensions of the ext groups between them.  In the above the simples which were denoted by $S$ are left modules (since the functor $\Hom_{\w{X}}(\c{O}_{\w{X}}\oplus \c{M}_{I},-)$ has image in left modules) and so we need to reverse the order of the simples,  i.e.
\[
\begin{array}{c}
\t{number of arrows in $A$ }i\rightarrow j = \ext^1_A(S_j,S_i)\\
\t{number of relations in $A$ } i\rightarrow j = \ext^2_A(S_j,S_i)
\end{array}
\]
The fact that this gives a presentation of the algebra is mostly well-known.  The statement on the $\ext^1$ is always true whereas for the statement on $\ext^2$ we are using the fact that we are in the formal case so our path algebras are complete; see for example \cite[3.4(b)]{BIKS}.
\end{proof}

\begin{corollary}\label{gldim1}
Let $\w{X}\rightarrow \Spec R$ be some resolution of an affine complete rational surface singularity, then 
\[
\gl \End_{\w{X}}(\c{O}_{\w{X}}\oplus \c{M}_{I})=\left\{\begin{array}{cl} 3& \mbox{if there exists $E_i$ with $E_i^2<-2$}\\ 2 & \mbox{else.}\end{array}\right. 
\]
\end{corollary}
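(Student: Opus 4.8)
The plan is to simply read the answer off the complete table of ext groups between the simples in Theorem~\ref{extcalculation}, using the standard principle that for the complete path algebra $A:=\End_{\w{X}}(\c{O}_{\w{X}}\oplus \c{M}_{I})$ the global dimension equals the largest $t$ for which $\ext^t(S,S')\neq 0$ for some pair of simple modules $S,S'$. Concretely $\gl A=\sup_S\pd S$, and $\pd S\leq n$ is detected by the vanishing of $\ext^{n+1}(S,-)$ against the simples; this is the same complete-path-algebra fact already invoked in Corollary~\ref{main} (see \cite[3.4(b)]{BIKS}). Since $A$ is module-finite over the complete local ring $R$, left and right global dimension coincide, so it suffices to work with the left simples $S_\star,S_i$ for which the ext groups were computed.

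First I would record the upper bound. Theorem~\ref{extcalculation} states that all ext groups vanish in degrees $\geq 4$, so $\gl A\leq 3$, and the only entry of the table that is not identically zero in degree $3$ is $\ext^3(S_i,S_\star)$, equal to $-E_i^2-2$ for a non-$(-1)$-curve and to $0$ for a $(-1)$-curve. Since the exceptional intersection form is negative definite we have $E_i^2\leq -1$ for every $i$, so a non-$(-1)$-curve satisfies $E_i^2\leq -2$ and hence $-E_i^2-2\geq 0$, with strict positivity precisely when $E_i^2<-2$; a $(-1)$-curve has $E_i^2=-1\geq -2$. Thus some degree-$3$ ext group is nonzero if and only if there exists $E_i$ with $E_i^2<-2$, which yields $\gl A=3$ in exactly that case.

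It remains to treat the complementary situation, where every $E_i^2\in\{-1,-2\}$; the previous paragraph already gives $\gl A\leq 2$, so the real task is the lower bound $\gl A\geq 2$, i.e.\ exhibiting a nonzero degree-$2$ ext. If some $E_i$ is a $(-1)$-curve then the bottom option of the table gives $\ext^2(S_i,S_\star)=1\neq 0$. Otherwise every curve is a $(-2)$-curve, so $Z_K\cdot E_i=E_i^2+2=0$ for all $i$; by nondegeneracy of the intersection form this forces $Z_K=0$, whence $e-2=-Z_K\cdot Z_f+1=1$ and $\ext^2(S_\star,S_\star)=e-2=1\neq 0$. In either subcase $\gl A=2$, completing the argument.

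I expect the only genuine subtlety to be this last lower bound: the bound $\gl A\leq 2$ is immediate from the table, but ruling out $\gl A\leq 1$ requires the case split above, and in the all-$(-2)$ regime one must use the embedding-dimension identity $e-2=-Z_K\cdot Z_f+1$ together with $Z_K=0$ rather than any ext entry at a curve vertex. The other point to state carefully is the complete-path-algebra principle that global dimension is detected by ext between simples, though this is already in force from Corollary~\ref{main}.
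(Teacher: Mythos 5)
Your proposal is correct and follows essentially the same route as the paper: the paper's proof is a one-line appeal to Theorem~\ref{extcalculation}, observing that all ext groups between simples vanish in degree $\geq 3$ except $\ext^3(S_i,S_\star)$, which is nonzero exactly when $E_i^2<-2$. You additionally verify the lower bounds ($\gl A\geq 2$ via $\ext^2(S_i,S_\star)=1$ for a $(-1)$-curve or $\ext^2(S_\star,S_\star)=e-2\geq 1$ otherwise), which the paper leaves implicit; this is a welcome but not substantively different elaboration.
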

\begin{proof}
By Theorem~\ref{extcalculation}  $\Ext^t(X,Y)=0$ for all $t\geq 3$ and all simples $X$ and $Y$, except possibly the case $\Ext^3(S_i,S_\star)$ which is zero unless $E_i^2<-2$.
 \end{proof}

We now relate the above to the work of Wunram so below we restrict our attention to the minimal resolution $\pi:\hat{X}\to\Spec R$.  Recall the following:

\begin{definition}[\cite{Wunram_generalpaper}]\label{specialdef}
For a given CM module $M$ of $R$ define $\c{M}:=\pi^*M/{\rm torsion}$ to be the corresponding vector bundle on $\hat{X}$.  A CM $R$-module $M$ is called \emph{special} if $\H^1(\c{M}^\vee)=0$.
\end{definition}

By Van den Bergh's construction it is immediate that $\c{O}_{\hat{X}}\oplus \c{M}_{I}$ is equal to the sum, over all special indecomposable CM $R$-modules, of the corresponding vector bundles $\c{M}$ defined in \ref{specialdef}.  

\begin{lemma}\label{up=down}
Let $\hat{X}$ be the minimal resolution, then the natural map 
\[
\End_{\hat{X}}(\c{O}_{\hat{X}}\oplus \c{M}_{I})\rightarrow \End_R(\oplus M)
\] is an isomorphism, where the right hand sum is taken over all indecomposable special CM $R$-modules.
\end{lemma}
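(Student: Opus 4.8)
The plan is to realize the natural map as pushforward along $\pi$ and then prove it is an isomorphism by comparing the two sides in codimension one, using that both are reflexive $R$-modules. Write $\c{V}:=\c{O}_{\hat{X}}\oplus\c{M}_{I}$. A sheaf endomorphism $\c{V}\to\c{V}$ pushes forward to an endomorphism $\pi_*\c{V}\to\pi_*\c{V}$, and this is the displayed map. Since $R$ is a rational surface singularity one has $\pi_*\c{O}_{\hat{X}}=R$, and since each $\c{M}_i$ is the full (special) sheaf attached to $M_i$ in the sense of Definition~\ref{specialdef}, one has $\pi_*\c{M}_i=M_i$; hence $\pi_*\c{V}=\oplus M$ and the map is exactly $\End_{\hat{X}}(\c{V})\to\End_R(\pi_*\c{V})$, identifying the two rings in the statement. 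Note also that, $\Spec R$ being affine, $\End_{\hat{X}}(\c{V})=\Gamma(\hat{X},\c{H}om_{\hat{X}}(\c{V},\c{V}))=\pi_*\c{E}$ as an $R$-module, where $\c{E}:=\c{H}om_{\hat{X}}(\c{V},\c{V})$ is a vector bundle on the smooth surface $\hat{X}$.

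Next I would check that both sides are reflexive $R$-modules. The right-hand side $\Hom_R(\oplus M,\oplus M)$ is reflexive because each $M$ is a CM (hence reflexive, as $R$ is a two-dimensional normal CM ring) module, and $\Hom_R(-,N)$ takes values in reflexive modules whenever $N$ is reflexive. The left-hand side equals $\pi_*\c{E}$ with $\c{E}$ locally free on $\hat{X}$; the pushforward of a locally free sheaf along a resolution of a normal surface singularity is torsion-free and satisfies $S_2$, which over a two-dimensional normal ring is exactly reflexivity.

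I would then compare the two sides away from the closed point. Because $R$ is complete local of dimension two, every height-one prime lies in the punctured spectrum $\Spec R\setminus\{\m\}$, over which $\pi$ is an isomorphism; there $\c{M}_i$ restricts to $M_i$ and the comparison map is literally the identity. Thus the natural map is an isomorphism at every height-one prime of $R$. To conclude, recall that for reflexive modules $F,G$ over a normal domain one has $F=\bigcap_{\hgt\p=1}F_\p$ and $G=\bigcap_{\hgt\p=1}G_\p$ inside their common generic fibre, so any homomorphism $F\to G$ that is an isomorphism at all height-one primes is itself an isomorphism. Applying this to our map, which by the previous two steps is a morphism of reflexive modules that is an isomorphism in codimension one, finishes the proof.

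I expect the only substantive inputs to be the identifications $\pi_*\c{O}_{\hat{X}}=R$ and $\pi_*\c{M}_i=M_i$ (Van den Bergh's full-sheaf construction, already invoked in the excerpt) together with the reflexivity of $\pi_*\c{E}$ in the second step; once these are in hand the codimension-one argument is purely formal. The main obstacle is therefore establishing reflexivity of the left-hand side cleanly, i.e.\ verifying the $S_2$/torsion-free property of the pushforward; everything else reduces to the standard fact that reflexive modules are recovered from their height-one localizations.
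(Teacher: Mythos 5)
Your overall architecture coincides with the paper's: realize the map as taking global sections, observe it is an isomorphism away from the singular point, and conclude by showing both sides are reflexive. The right-hand side and the codimension-one argument are fine. The genuine gap is in your treatment of the left-hand side: it is \emph{not} true that the pushforward of a locally free sheaf along a resolution of a normal surface singularity is torsion-free and $S_2$. Torsion-freeness holds, but $S_2$ fails in general: on the minimal resolution of a rational surface singularity one has $\m\c{O}_{\hat{X}}=\c{O}_{\hat{X}}(-Z_f)$ and $\pi_*\c{O}_{\hat{X}}(-Z_f)=\m$, which has depth $1$ (from $0\to\m\to R\to k\to 0$ one gets $\H_{\m}^1(\m)\cong k\neq 0$) and reflexive hull $R\neq\m$. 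The failure persists for endomorphism bundles: if $\c{V}=\c{O}_{\hat{X}}\oplus\c{L}$ for a line bundle $\c{L}$, then $\c{V}^\vee\otimes\c{V}$ contains $\c{L}^\vee$ as a direct summand, and $\pi_*\c{L}^\vee$ can fail to be reflexive exactly as above. So the step you yourself flag as the main obstacle is where the argument breaks, and it cannot be repaired by a general statement about pushforwards; one must use the specific properties of the sheaves $\c{M}_i$.

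The paper closes exactly this gap via Esnault's criterion \cite[2.1]{Esnault}: for a locally free sheaf $\c{F}$ on $\hat{X}$ with $\H^1(\c{F}^\vee\otimes\omega)=0$ (and generated by global sections), the pushforward $\pi_*\c{F}$ is reflexive. Applied to $\c{F}=\c{M}_1^\vee\otimes\c{M}_2$, the required vanishing $\H^1(\c{M}_2^\vee\otimes\c{M}_1\otimes\omega)=0$ follows because $\c{M}_1$ is globally generated, so $\c{M}_2^\vee\otimes\c{M}_1\otimes\omega$ is a quotient of copies of $\c{M}_2^\vee\otimes\omega$, whose $\H^1$ vanishes since $\c{M}_2$ is a full sheaf (and $\H^1$ is right exact here as the fibres of $\pi$ are at most one-dimensional). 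Note that global generation alone does not suffice --- $\c{O}_{\hat{X}}(-Z_f)$ is globally generated --- so the cohomological vanishing is the essential input. Your proof becomes correct once the general $S_2$ claim is replaced by this verification; everything else you wrote matches the paper's argument.
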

\begin{proof}
It is well-known that if $M$ is a CM $R$-module then $\pi_{*}\c{M}=M$ \cite[2.2]{Esnault}.  Thus taking global sections gives the natural map, which is an isomorphism away from the unique singular point.  We know that $\End_R(\oplus M)$  is reflexive since it is CM, thus if we prove that  $\End_{\hat{X}}(\c{O}_{\hat{X}}\oplus \c{M}_{I})$ is reflexive then it follows that the map is an isomorphism.  But now each $\Hom_{\hat{X}}(\c{M}_1,\c{M}_2)\cong \pi_{*}(\c{M}_1^{\vee}\otimes \c{M}_2)$ and trivially  $\c{M}_1^{\vee}\otimes \c{M}_2$ is locally free.  Further
\[
\H^1((\c{M}_1^{\vee}\otimes \c{M}_2)^\vee\otimes \omega)= \H^1(\c{M}_2^\vee\otimes \c{M}_1\otimes \omega)
\]
equals zero since $\H^1(\c{M}_2^\vee\otimes\omega)=0$ and also $\c{M}_{1}$ is generated by global sections.  Hence by \cite[2.1]{Esnault} we see that each $\Hom_{\hat{X}}(\c{M}_1,\c{M}_2)\cong \pi_{*}(\c{M}_1^{\vee}\otimes \c{M}_2)$ is reflexive, thus $\End_{\hat{X}}(\c{O}_{\hat{X}}\oplus \c{M}_{I})$ is reflexive.
\end{proof}

Consequently there is a version of Corollary~\ref{main} for $\End_R(\oplus M)$ and so this proves Theorem~\ref{intromain}.  Since in the quotient case the indecomposable special CM modules are precisely $(\rho\otimes \C{}[[x,y]])^G$ as $\rho$ runs over the special irreducible representations, this also proves Corollary~\ref{GLintro}.

Combining Lemma~\ref{up=down} and Corollary~\ref{gldim1} gives the following improvement of \cite[2.15]{Iyama_Wemyss_specials}.
\begin{corollary}\label{gldim2}
Let $R$ be an affine complete rational surface singularity and set $A:=\End_{R}(\oplus M)$ where the sum is taken over all indecomposable special CM $R$-modules.  Then 
\[
\gl A=\left\{\begin{array}{cl} 2& \mbox{if $R$ is Gorenstein}\\ 3 & \mbox{else.}\end{array}\right. 
\]
When $R$ is Gorenstein all simple left $A$-modules and all simple right $A$-modules have projective dimension 2.  When $R$ is not Gorenstein all simple right $A$-modules have projective dimension 2 except the simple corresponding to $\star$, which has projective dimension 3.  As left $A$-modules, the projective dimension of the simples at $\star$ and all curves corresponding to $(-2)$-curves have projective dimension 2, whereas all other simples have projective dimension 3.
\end{corollary}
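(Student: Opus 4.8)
The plan is to reduce everything to the minimal resolution and then read the answer straight off the Ext computation in Theorem~\ref{extcalculation}. By Lemma~\ref{up=down} the natural map $\End_{\hat{X}}(\c{O}_{\hat{X}}\oplus \c{M}_I)\to \End_R(\oplus M)=A$ is an isomorphism, so all homological data of $A$ may be extracted from Theorem~\ref{extcalculation} applied to $\pi:\hat{X}\to\Spec R$. Since $\hat{X}$ is minimal we have $Z_K\cdot E_i\leq 0$, equivalently $E_i^2\leq -2$, for every $i$; in particular there are no $(-1)$-curves, so only the top option in each split of Theorem~\ref{extcalculation} occurs. I would first record the global dimension: a rational surface singularity is Gorenstein precisely when it is a rational double point, i.e. when every $E_i$ is a $(-2)$-curve, i.e. when no $E_i$ satisfies $E_i^2<-2$. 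Comparing this with Corollary~\ref{gldim1} gives $\gl A=2$ in the Gorenstein case and $\gl A=3$ otherwise.

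For the per-simple statements the key enabling fact is that, since $R$ is complete local, $A$ is a module-finite semiperfect algebra, so the projective dimension of any finitely generated module $N$ equals $\sup\{t:\Ext^t_A(N,S)\neq 0,\ S\text{ simple}\}$. Hence the projective dimension of a simple is just the largest $t$ for which it has nonzero Ext into some simple, and all these numbers are tabulated in Theorem~\ref{extcalculation}. The simples $S_\star,S_i$ there are left modules, so for left $A$-modules I read off the \emph{rows} of the table: $\pd S_\star=2$ because $\ext^2(S_\star,S_\star)=e-2\geq 1$ (the embedding dimension of a genuine surface singularity is at least $3$) while $\ext^{\geq 3}(S_\star,-)=0$; and $\pd S_i=3$ exactly when $\ext^3(S_i,S_\star)=-E_i^2-2>0$, i.e. when $E_i^2<-2$, whereas $\pd S_i=2$ when $E_i$ is a $(-2)$-curve (there $\ext^3(S_i,S_\star)=0$ but $\ext^2(S_i,S_i)=-1-E_i^2=1\neq 0$). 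This yields the stated left-module dimensions.

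For right $A$-modules I would use the transpose relation underlying Corollary~\ref{main}, namely $\dim\Ext^t_{A}(T_a,T_b)=\ext^t(S_b,S_a)$ for the simple right modules $T_a$, so that the projective dimension of the right simple at a vertex is the maximum over the corresponding \emph{column} of the table rather than its row. Reading columns, every curve column has $\ext^3(-,S_i)=0$ (both $\ext^3(S_j,S_i)$ and $\ext^3(S_\star,S_i)$ vanish) while $\ext^2(S_i,S_i)=-1-E_i^2\geq 1$, so each right simple at a curve has projective dimension $2$; the column at $\star$ contains $\ext^3(S_i,S_\star)=-E_i^2-2$, which is nonzero for some $i$ exactly when $R$ is not Gorenstein, giving $\pd T_\star=3$ in that case and $\pd T_\star=2$ (via $\ext^2(S_\star,S_\star)=e-2$) when $R$ is Gorenstein. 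Assembling the row and column readings produces the full dichotomy.

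I expect the only real obstacle to be the left/right bookkeeping. The Ext table of Theorem~\ref{extcalculation} is decidedly not symmetric --- for instance $\ext^3(S_i,S_\star)$ can be nonzero while $\ext^3(S_\star,S_i)=0$ --- and it is precisely this asymmetry, seen through the row-versus-column distinction for left versus right simples, that produces the asymmetric failure of homological homogeneity. The remaining inputs (the Du Val characterization of the Gorenstein property, the bound $e\geq 3$, and the detection of projective dimension by Ext against simples in the semiperfect setting) are standard and require no computation beyond what Theorem~\ref{extcalculation} already supplies.
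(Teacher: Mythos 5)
Your proposal is correct and follows essentially the same route as the paper: reduce to the minimal resolution via Lemma~\ref{up=down}, obtain the global dimension from Corollary~\ref{gldim1} together with the fact that Gorenstein rational surface singularities are exactly those whose minimal resolution has only $(-2)$-curves, and then read the projective dimensions of the left (resp.\ right) simples off the rows (resp.\ columns) of the Ext table in Theorem~\ref{extcalculation}. The only cosmetic difference is that you justify the detection of projective dimension by Ext against simples via semiperfectness, where the paper simply states the analogous formula.
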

\begin{proof}
The statement on the global dimension follows immediately from Corollary~\ref{gldim1} since $\End_{R}(\oplus M)\cong \End_{\hat{X}}(\c{O}_{\hat{X}}\oplus \c{M}_{I})$ by Lemma~\ref{up=down} and on the minimal resolution the configurations containing only $(-2)$-curves occur precisely when $R$ is Gorenstein. \\
Now the simples $S$ in Theorem~\ref{extcalculation} are left $A=\End_{R}(\oplus M)$-modules with
\[
\pd_A (S_i)=\t{min}\{ t\geq 0\mid\Ext^t(S_i,X)=0 \t{ for all simples }X  \}-1
\]
and so the statement on left modules follows immediately by inspecting the ext groups in Theorem~\ref{extcalculation}.  Now denote $T_{i}$ to be the simple right $A$-module corresponding to $M_i$ in the decomposition of $M$, then
\[
\pd_A (T_i)=\t{min}\{ t\geq 0\mid\Ext^t(X,S_i)=0 \t{ for all simples }X  \}-1
\]
and so the result follows from Theorem~\ref{extcalculation}.
\end{proof}

We finish this section with the following trivial but convenient lemma which reduces the calculation of the quiver to simply adding arrows to a certain base quiver, as is true for reconstruction algebras of type $A$ \cite{Wemyss_reconstruct_A}.
\begin{lemma}\label{basecase}
Suppose $E=\{ E_i\}$ forms the exceptional curves on a minimal resolution of some affine complete rational surface singularity.  Associate the fundamental cycle $Z_f=\sum r_iE_i$ and canonical cycle $Z_K^E$.  Suppose  $F=\{ F_i\}$ forms the exceptional curves on a minimal resolution of another affine complete rational surface singularity with the same dual graph \textbf{and} fundamental cycle, such that $-F_i^2\leq -E_i^2$ for all $i$.  Then\\
\t{(i)} $-Z_f\cdot E_i=-Z_f\cdot F_i+(-E_i^2+F_i^2)$\\
\t{(ii)} $(Z_K^E-Z_f)\cdot E_i=(Z_K^F-Z_f)\cdot F_i$\\
Consequently the quiver for the curve system $E$ is obtained from the quiver of the curve system $F$ by adding $-E_i^2+F_i^2$ extra arrows $i\rightarrow \star$ for every curve $E_i$.  
\end{lemma}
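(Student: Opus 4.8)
The plan is to compare the two intersection theories curve by curve, exploiting that the only datum differing between the $E$- and $F$-systems is the diagonal of self-intersections. Write $Z_f=\sum_j r_jE_j$ for the common fundamental cycle; the hypothesis that the dual graphs agree gives $E_i\cdot E_j=F_i\cdot F_j$ for all $i\neq j$, so every off-diagonal term cancels on subtraction. First I would record the elementary consequence
\[
Z_f\cdot E_i-Z_f\cdot F_i=r_i(E_i^2-F_i^2),
\]
obtained by expanding both numbers and cancelling the equal off-diagonal contributions. Curves with $E_i^2=F_i^2$ contribute nothing to (i) and need no further thought, so the whole lemma rests on the curves whose self-intersection genuinely changes.

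The hard part will be the spurious factor $r_i$: formula (i) asserts the coefficient is $1$, not $r_i$, so I must show that $r_i=1$ for every curve with $E_i^2\neq F_i^2$. This is where rationality enters. Both $R$ are rational, so by Artin the fundamental cycle has arithmetic genus zero in each system; equivalently the identity $Z_f\cdot Z_f-Z_K\cdot Z_f=-2$ — which is exactly the relation $-Z_K\cdot Z_f+1=-1-Z_f\cdot Z_f$ recorded before Theorem~\ref{extcalculation} — holds for both. Subtracting the $F$-identity from the $E$-identity, using $Z_K\cdot E_j=E_j^2+2$ to expand $Z_K\cdot Z_f$ and again the cancellation of off-diagonal terms to expand $Z_f\cdot Z_f$, the two $-2$'s cancel and the difference collapses to
\[
\sum_j r_j(r_j-1)(F_j^2-E_j^2)=0.
\]
Since $r_j\geq 1$ and $F_j^2-E_j^2\geq 0$ for every $j$, every summand is non-negative, hence each vanishes; as $r_j(r_j-1)=0$ forces $r_j=1$, I conclude $r_j=1$ whenever $F_j^2\neq E_j^2$. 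Substituting back into the first display turns $r_i(E_i^2-F_i^2)$ into $E_i^2-F_i^2$, which is precisely (i). I expect this genus-zero bookkeeping to be the only genuinely non-formal step; the naive per-curve inequalities around $E_i$ alone do not pin down $r_i$, so the global identity is what does the work.

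Part (ii) is then almost automatic: by adjunction $(Z_K^E-Z_f)\cdot E_i=(E_i^2+2)-Z_f\cdot E_i$ and likewise for $F$, so the difference of the two sides is $(E_i^2-F_i^2)-(Z_f\cdot E_i-Z_f\cdot F_i)$, which is zero by (i). For the final statement I would simply read off Corollary~\ref{main}. Both resolutions are minimal, so there are no $(-1)$-curves and only the top options apply: the arrows $i\to j$ number $(E_i\cdot E_j)_+$ and are unchanged since the graphs agree, the arrows $\star\to i$ number $((Z_K-Z_f)\cdot E_i)_+$ and are unchanged by (ii), while the arrows $i\to\star$ number $-Z_f\cdot E_i$, which by (i) exceeds its $F$-counterpart $-Z_f\cdot F_i$ by exactly $-E_i^2+F_i^2\geq 0$. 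Hence the $E$-quiver is obtained from the $F$-quiver by adjoining $-E_i^2+F_i^2$ arrows $i\to\star$ for each curve $E_i$, as claimed.
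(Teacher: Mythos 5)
Your proof is correct, and its skeleton matches the paper's: both expand $Z_f\cdot E_i-Z_f\cdot F_i=r_i(E_i^2-F_i^2)$ using that the off-diagonal intersection numbers and the coefficients $r_i$ agree, reduce everything to showing $r_i=1$ whenever $E_i^2\neq F_i^2$, and then note that (ii) and the quiver statement follow formally (your reading of Corollary~\ref{main}, using that minimality excludes $(-1)$-curves, is exactly the intended one). The one place you genuinely diverge is the key combinatorial input: the paper simply cites \cite[3.9]{TT} for the fact that a curve whose self-intersection can be decreased without changing the fundamental cycle must have coefficient $1$, whereas you derive it by subtracting the two instances of Artin's identity $Z_f^2-Z_K\cdot Z_f=-2$ to get $\sum_j r_j(r_j-1)(F_j^2-E_j^2)=0$ and invoking positivity of each summand. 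That computation is correct and self-contained, and it buys independence from the external reference at essentially no cost; the paper's citation buys brevity. Your remark that a purely local argument around $E_i$ cannot pin down $r_i$ is a fair justification for why some global input (either the cited combinatorics or your genus identity) is needed.
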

\begin{proof}
(i) If $-F_i^2= -E_i^2$ then there is nothing to prove, so we can assume that $-F_i^2< -E_i^2$.  Since both $F$ and $E$ have the same $Z_f$, by definition it is true that  $-Z_f\cdot E_i=-Z_f\cdot F_i+r_i(-E_i^2+F_i^2)$.  However the point is that by combinatorics on rational surfaces, the condition $-F_i^2< -E_i^2$ forces $r_i=1$ (see e.g. \cite[3.9]{TT}) and so (i) follows.  The remaining part (ii) is now trivial.
\end{proof}

\section{The Representation Theoretic Method}\label{Section:RepMethod}
Here we compute $\End_{\C{}[[x,y]]^G}( \oplus_{\rho\,\,\t{special}} (\rho\otimes \C{}[[x,y]])^G)$ directly, without assuming any of the geometry.  As stated in the introduction, combining this with Theorem~\ref{main} and Lemma~\ref{up=down}  gives a method to recover the dual graph directly from the representation theory, and also provides us with more information than the non-explicit proof.

Denote $R=\C{}[[x,y]]^G$, where $G$ is some small finite subgroup of
$\GL(2,\C{})$.  Note that the AR quiver of the category of CM
$R$-modules (i.e. all irreducible maps between the CM modules)
coincides with the McKay quiver by a result of Auslander
\cite{Auslander_AR=McKay}, and further all such quivers for small
finite subgroups of $\GL(2,\C{})$ were classified in
\cite{AR_McKayGraphs}.  Below let $\c{S}$ be the set of indecomposable special CM $R$-modules.

For two indecomposable special CM $R$-modules $M$ and $N$ we wish to determine the number of
arrows from $M$ to $N$ in the quiver of the reconstruction algebra.  Because everything is graded, this is just the dimension
of the space of morphisms from $M$ to $N$ which don't factor through
a special CM module via maps of strictly smaller positive degree.

Proceed as follows:
\begin{itemize}
\item[(i)] In the AR quiver, begin by writing a 1 at the position corresponding to $M$ and then define for every CM module $V$
\[
\mu_V^{(0)}:=\lambda_V^{(0)}:=\left\{  \begin{array}{cl} 1 & \t{ if } V=M\\0&\t{ else.} \end{array}\right. 
\]
\item[(ii)] Consider all arrows out of $M$ in the AR quiver. For convenience we call the heads of these arrows the first-step vertices.  For every CM module $V$ define
\[
\lambda_V^{(1)}=\left\{  \begin{array}{cl} 1 & \t{ if } V \t{ is a first-step vertex}\\0&\t{ else} \end{array}\right.
\]
and then define
\[
\mu_V^{(1)}=\left\{  \begin{array}{cl}0&\t{ if } V \in\c{S}\\
\lambda_V^{(1)} & \t{ else.}
\end{array}\right. 
\]
In the AR quiver, at every first-step vertex $M_1$ write the corresponding number 
$\lambda_{M_1}^{(1)}$ and then circle the first-step vertices which
belong to $\c{S}$.
\item[(iii)] Now consider all arrows out of the first-step vertices.  The heads of these arrows are called the second-step vertices.  For every CM module $V$ define
\[
\lambda_{V}^{(2)}=\left\{ \begin{array}{cl}\t{max}\{0,-\mu_{\tau
(V)}^{(0)} +\sum_{a:L\rightarrow V}\mu_{L}^{(1)} \}& \t{ if }V\t{ is
a second-step vertex} \\ 0 & \t{ else} \end{array}\right.
\]
and then define
\[
\mu_{V}^{(2)}=\left\{ \begin{array}{cl}0& \t{ if }V\in\c{S}
\\ \lambda_V^{(2)} & \t{ else.}
\end{array}\right. 
\]
At every second-step vertex $M_2$ write the corresponding number
$\lambda_{M_2}^{(2)}$ and then circle the second-step vertices which
belong to $\c{S}$.
\item[(iv)] Next consider all arrows in the AR quiver out of the second-step vertices.  The heads of these arrows are called the third-step vertices.  For every CM module $V$ define
\[
\lambda_{V}^{(3)}=\left\{ \begin{array}{cl}\t{max}\{0, -\mu_{\tau
(V)}^{(1)} +\sum_{a:L\rightarrow V}\mu_{L}^{(2)} \}& \t{ if }V\t{ is
a third-step vertex} \\ 0 & \t{ else} \end{array}\right.
\]
and
\[
\mu_{V}^{(3)}=\left\{ \begin{array}{cl}0& \t{ if }V\in\c{S}
\\ \lambda_V^{(3)} & \t{ else.}
\end{array}\right.
\]
At every third-step vertex $M_3$ write the corresponding number $\lambda_{M_3}^{(3)}$ and
then circle the third-step vertices which belong to $\c{S}$.
\item[(v)] Continue in this fashion.  For any $V$ and any $i$, $\lambda_V^{(i)}$
records the dimension of the space of maps of degree $i$ from $M$ to
$V$ which don't factor through $\c{S}$ via maps of strictly smaller
positive degree (for the proof, see below).  Consequently the
dimension of the space of maps from $M$ to $N$ which don't factor
through $\c{S}$ via maps of strictly smaller positive degree is
simply $\sum_{i\geq 1}\lambda_N^{(i)}$.
\end{itemize}
\begin{remark}
The proof below is similar to \cite[\S 4]{Iyama_Wemyss_specials} (which itself was inspired by knitting), where there $\c{S}=\{ R\}$ was taken to
establish the dimension of the vector space $\u{\Hom}_{R}(M,N)$.
However the proof here is more subtle since now both $M$ and
$N$ belong to $\c{S}$ and so $\u{\Hom}_{\c{S}}(M,N)=0$, making it harder to use functorial proofs. 
\end{remark}
\noindent
\emph{Proof of algorithm:}\\
It is clear that the start of the algorithm is correct and so we
only need to verify the general induction step.  Thus assume that the result
is true for smaller $n$.   Consider the AR quiver $Q_{AR}$ of the
category of CM $\C{}[[x,y]]^G$-modules.  By \cite{AR_McKayGraphs}
this is just $\mathbb{Z}\Delta/G$ where $\Delta$ is some extended
Dynkin quiver and $G$ is some group of automorphisms; in fact
\[
\mathbb{Z}\Delta\rightarrow Q_{AR}
\]
is a covering.  Fix $M$ to be in degree 0 --- it is more convenient to
work with $\mathbb{Z}\Delta$ since there the grading is evident.
Consider the mesh category $\c{C}:=k(\mathbb{Z}\Delta)$; in the
language of \cite{Iyama_tau1}, $\c{C}$ is a $\tau$-category. Now
define $\c{S}_n$ to be all those objects of $\c{C}$ lying in degrees
between 1 and $n-1$ (inclusive) which belong to $\c{S}$, and
consider the quotient category $\c{C}/ [\c{S}_n]$; this too is a
$\tau$-category. Since the AR quiver records all irreducible maps,
it is clear by construction that if $V$ is any CM module then the
dimension of the space of maps of degree $n$ between $M$ and $V$
which don't factor through an object of $\c{S}$ via maps of strictly
smaller positive degree is
\[
\dim_k \Hom_{\c{C}/ [\c{S}_n]}(M,V)
\]
where the $M$ sits in degree 0 and the $V$ sits in degree $n$.  By
the theory of ladders in $\tau$-categories \cite{Iyama_tau1} this
can be calculated explicitly using a recursion formula (see e.g.
\cite[4.5]{Iyama_Wemyss_specials}).  Now the AR quiver of $\c{C}/
[\c{S}_n]$ is just the quiver of $\c{C}$ with the vertices in
$\c{S}_n$ deleted, and furthermore by inductive hypothesis (and
construction of the smaller $\c{C}/[\c{S}_{n-1}]$) any term in the
recursion at a previous vertex is given by the $\mu$ associated to
that vertex.  Consequently the recursion formula in
\cite[4.5]{Iyama_Wemyss_specials} gives
\[
\dim_k \Hom_{\c{C}/ [\c{S}_n]}(M,V)=\left\{
\begin{array}{cl}\t{max}\{ 0, -\mu_{\tau (V)}^{(n-2)}
+\sum_{a:L\rightarrow V}\mu_{L}^{(n-1)} \}& \t{ if }V\t{ is a step
$n$ vertex} \\ 0 & \t{ else} \end{array}\right.
\]
which by definition is $\lambda_V^{(n)}$. Thus indeed
$\lambda_V^{(n)}$ records the dimension of the space of maps between
$M$ and $V$ of degree $n$ which don't factor through an object of
$\c{S}$ via maps of strictly smaller positive degree.

\medskip

We now use the above algorithm to re-prove Theorem~\ref{main} for some examples of groups $\mathbb{I}_m$ (see \S\ref{groups} for notation).  By \cite{AR_McKayGraphs} the AR quiver of $\C{}[[x,y]]^{\mathbb{I}_m}$ is
\[
\begin{tikzpicture}[xscale=0.6,yscale=0.6]
\node (A1) at (1,0) [vertex] {};
\node (A2) at (3,0) [vertex] {};
\node (A3) at (7,0) [vertex] {};
\node (B1) at (0,-1) [vertex] {};
\node (B2) at (2,-1) [vertex] {};
\node (B3) at (4,-1) [vertex] {};
\node (B4) at (6,-1) [vertex] {};
\node (B5) at (8,-1) [vertex] {};
\node (C1) at (0,-2) [vertex] {};
\node (C2) at (1,-2) [vertex] {};
\node (C3) at (2,-2) [vertex] {};
\node (C4) at (3,-2) [vertex] {};
\node (C5) at (4,-2) [vertex] {};
\node (C6) at (6,-2) [vertex] {};
\node (C7) at (7,-2) [vertex] {};
\node (C8) at (8,-2) [vertex] {};
\node (D1) at (0,-3) [vertex] {};
\node (D2) at (2,-3) [vertex] {};
\node (D3) at (4,-3) [vertex] {};
\node (D4) at (6,-3) [vertex] {};
\node (D5) at (8,-3) [vertex] {};
\node (E1) at (1,-4) [vertex] {};
\node (E2) at (3,-4) [vertex] {};
\node (E3) at (7,-4) [vertex] {};
\node (F1) at (0,-5) [vertex] {};
\node (F2) at (2,-5) [vertex] {};
\node (F3) at (4,-5) [vertex] {};
\node (F4) at (6,-5) [vertex] {};
\node (F5) at (8,-5) [vertex] {};
\node (G1) at (1,-6) [vertex] {};
\node (G2) at (3,-6) [vertex] {};
\node (G3) at (7,-6) [vertex] {};
\node (H1) at (0,-7) [cvertex] {};
\node (H2) at (2,-7) [vertex] {};
\node (H3) at (4,-7) [vertex] {};
\node (H4) at (6,-7) [vertex] {};
\node (H5) at (8,-7) [cvertex] {};
\draw[->] (B1) -- (A1);
\draw[->] (A1) -- (B2);
\draw[->] (B2) -- (A2);
\draw[->] (A2) -- (B3);
\draw[->] (B4) -- (A3);
\draw[->] (A3) -- (B5);

\draw[->] (B1) -- (C2);
\draw[->] (C2) -- (B2);
\draw[->] (B2) -- (C4);
\draw[->] (C4) -- (B3);
\draw[->] (B4) -- (C7);
\draw[->] (C7) -- (B5);

\draw[->] (C1) -- (C2);
\draw[->] (C2) -- (C3);
\draw[->] (C3) -- (C4);
\draw[->] (C4) -- (C5);
\draw[->] (C6) -- (C7);
\draw[->] (C7) -- (C8);

\draw[->] (D1) -- (C2);
\draw[->] (C2) -- (D2);
\draw[->] (D2) -- (C4);
\draw[->] (C4) -- (D3);
\draw[->] (D4) -- (C7);
\draw[->] (C7) -- (D5);

\draw[->] (D1) -- (E1);
\draw[->] (E1) -- (D2);
\draw[->] (D2) -- (E2);
\draw[->] (E2) -- (D3);
\draw[->] (D4) -- (E3);
\draw[->] (E3) -- (D5);

\draw[->] (F1) -- (E1);
\draw[->] (E1) -- (F2);
\draw[->] (F2) -- (E2);
\draw[->] (E2) -- (F3);
\draw[->] (F4) -- (E3);
\draw[->] (E3) -- (F5);

\draw[->] (F1) -- (G1);
\draw[->] (G1) -- (F2);
\draw[->] (F2) -- (G2);
\draw[->] (G2) -- (F3);
\draw[->] (F4) -- (G3);
\draw[->] (G3) -- (F5);

\draw[->] (H1) -- (G1);
\draw[->] (G1) -- (H2);
\draw[->] (H2) -- (G2);
\draw[->] (G2) -- (H3);
\draw[->] (H4) -- (G3);
\draw[->] (G3) -- (H5);

\draw[densely dotted] (0,0) -- (0,-7);
\draw[densely dotted] (2,0) -- (2,-7);
\end{tikzpicture}
\]
where $\circ$ represents the free module and there are precisely $m$ repetitions of the original $\tilde{E}_8$ shown in dotted lines. The left and right hand sides of the picture are identified, and there is no twist in this AR quiver.  The AR translation $\tau$ moves each dotted segment one place to the left; $\tau^{-1}$ therefore moves it one place to the right.

\begin{example}\label{I1red}
Consider the group $\mathbb{I}_{30(b-2)+1}$ with $b\geq 3$.  By \cite[9.2]{Iyama_Wemyss_specials} the indecomposable special CM modules are $R$, $A_1$, $A_2$, $A_3$, $A_4$, $B_1$, $B_2$, $C$ and $M$ where
\[
\begin{array}{c}
A_i:=\tau^{-6i}R \mbox{ for all }1\leq i\leq 4\\
B_i:=\tau^{-10i}R \mbox{ for all }1\leq i\leq 2\\
C:=\tau^{-15}R\\
M:=\tau^{-30}R
\end{array}
\]

To determine the number of irreducible maps from $R$ to the other special CM modules we place a 1 in the position of $R$ (double circled below) and start counting:

{\tiny{
\[
\begin{array}{c}
\xymatrix@C=-4pt@R=-4pt{
&.&&.&&.&&1&&0&&0&&1&&0&&1&&0&&0&&1&&0\\
.&&.&&.&&1&&1&&0&&1&&1&&1&&1&&0&&1&&1&&0\\
.&.&.&.&.&1&1&1&0&1&1&1&0&1&1&2&1&1&0&1&1&1&0&1&1&1&0&0\\
.&&.&&1&&0&&1&&1&&1&&1&&1&&1&&1&&1&&0&&1&&0\\
&.&&1&&0&&0&&1&&1&&1&&0&&1&&1&&1&&0&&0&&1&&0 \\
.&&1&&0&&0&&0&&1&&1&&0&&0&&1&&1&&0&&0&&0&&1&&0 \\
&1&&0&&0&&0&&0&&1&&0&&0&&0&&1&&0&&0&&0&&0&&1&&0\\
{{}\drop\xycircle<4pt,4pt>{}{}\drop\xycircle<4.5pt,4.5pt>{}1}&&0&&0&&0&&0&&0&&{{}\drop\xycircle<4pt,4pt>{}1}&&0&&0&&0&&{{}\drop\xycircle<4pt,4pt>{}1}&&0&&{{}\drop\xycircle<4pt,4pt>{}0}&&0&&0&&{{}\drop\xycircle<4pt,4pt>{}1}&&0
}
\end{array}
\]}}Thus there is precisely one map from $R$ to $A_1$ (of grade 12), one map from $R$ to $B_1$ (of grade 20) and one map from $R$ to $C$ (of grade 30).

To determine the number of irreducible maps out of $A_1$, we place a 1 in the position of $A_1$ and start counting:

{\tiny{
\[
\begin{array}{c}
\xymatrix@C=-5pt@R=-4pt{
&.&&.&&.&&1&&0&&0&&1&&0&&1&&0&&0&&1&\\
.&&.&&.&&1&&1&&0&&1&&1&&1&&1&&0&&1&&1\\
.&.&.&.&.&1&1&1&0&1&1&1&0&1&1&2&1&1&0&1&1&1&0&1&1\\
.&&.&&1&&0&&1&&1&&1&&1&&1&&1&&1&&1&&1\\
&.&&1&&0&&0&&1&&1&&1&&0&&1&&1&&1&&1&\\
.&&1&&0&&0&&0&&1&&1&&0&&0&&1&&1&&1&&0\\
&1&&0&&0&&0&&0&&1&&0&&0&&0&&1&&1&&0&\\
{{}\drop\xycircle<4pt,4pt>{}{}\drop\xycircle<4.5pt,4.5pt>{}1}&&0&&0&&0&&{{}\drop\xycircle<4pt,4pt>{}0}&&0&&{{}\drop\xycircle<4pt,4pt>{}1}&\ar@{.}@<1ex>[-7,0]&0&&0&&{{}\drop\xycircle<4pt,4pt>{}0}&&1&&0&\ar@{.}@<1ex>[-7,0]&{{}\drop\xycircle<4pt,4pt>{}0}}
\end{array}
\hdots
\begin{array}{c}
\xymatrix@C=-4pt@R=-4pt{
0&&0&&1&&0&\\
&0&&1&&1&&0\\
1&1&1&0&1&1&1&0&0&\\
&1&&1&&0&&1&&0\\
1&&1&&0&&0&&1&&0\\
&1&&0&&0&&0&&1&&0\\
1&&0&&0&&0&&0&&1&&0\\
&{{}\drop\xycircle<4pt,4pt>{}1}&&0&&0&&0&&0&&1&&{{}\drop\xycircle<4pt,4pt>{}0}&
}
\end{array}
\]}}In the above picture the second special circled (i.e $A_2$) absorbs a 1 and the calculation continues, repeating the segment between the dotted lines.  Careful analysis shows that this repetition misses all the specials and keeps repeating until it reaches $R$ again, which absorbs a 1 (on the right hand side of the picture above), forcing the calculation to finish.  We deduce that there is precisely one irreducible map from $A_1$ to $A_2$, and one irreducible map from $A_1$ to $R$, and these are all the maps out of $A_1$.

The calculations for $A_3$, $A_4$, $B_1$, $B_2$ and $C$ are very similar.   The only remaining calculation which is non-trivial is determining the number of irreducible maps out of $M$.  For this begin by placing a 1 in the position of $M$ and begin to count.  Since there are no special CM modules between $M$ and $R$ this calculation between $M$ and $R$ is just the free expansion starting at $M$ (for terminology, see \cite[7.1]{Iyama_Wemyss_specials}).  This continues until it reaches $R$ (a grading of $2+60(b-3)$ away) and so \cite[9.1]{Iyama_Wemyss_specials} specifies the first two columns below, where the first circle corresponds to $R$:
\[
\begin{tikzpicture}[xscale=0.6,yscale=0.2]
\node at (0,7) {$\scriptscriptstyle 2b\t{-} 6$};
\node at (2,7) {$\scriptscriptstyle 2b\t{-} 6$};
\node at (4,7) {$\scriptscriptstyle 2b\t{-} 6$};
\node at (6,7) {$\scriptscriptstyle 2b\t{-} 5$};
\node at (8,7) {$\scriptscriptstyle b\t{-} 3$};
\node at (10,7) {$\scriptscriptstyle 2b\t{-} 6$};
\node at (12,7) {$\scriptscriptstyle 2b\t{-} 5$};
\node at (14,7) {$\scriptscriptstyle b\t{-} 3$};
\node at (16,7) {$\scriptscriptstyle 2b\t{-} 5$};
\node at (18,7) {$\scriptscriptstyle b\t{-} 2$};
\node at (1,6) {$\scriptscriptstyle 4b\t{-}12$};
\node at (3,6) {$\scriptscriptstyle 4b\t{-}12$};
\node at (5,6) {$\scriptscriptstyle 4b\t{-}11$};
\node at (7,6){$\scriptscriptstyle 3b\t{-}8$};
\node at (9,6){$\scriptscriptstyle 3b\t{-}9$};
\node at (11,6){$\scriptscriptstyle 4b\t{-}11$};
\node at (13,6){$\scriptscriptstyle 3b\t{-}8$};
\node at (15,6){$\scriptscriptstyle 3b\t{-}8$};
\node at (17,6){$\scriptscriptstyle 3b\t{-}7$};
\node at (19,6){$\scriptscriptstyle 2b\t{-}5$};
\node at (0,5) {$\scriptscriptstyle 6b\t{-}18$};
\node at (1,5) {$\scriptscriptstyle 3b\t{-}9$};
\node at (2,5) {$\scriptscriptstyle 6b\t{-}18$};
\node at (3,5) {$\scriptscriptstyle 3b\t{-}9$};
\node at (4,5) {$\scriptscriptstyle 6b\t{-}17$};
\node at (5,5) {$\scriptscriptstyle 3b\t{-}8$};
\node at (6,5) {$\scriptscriptstyle 5b\t{-}14$};
\node at (7,5) {$\scriptscriptstyle 2b\t{-}6$};
\node at (8,5) {$\scriptscriptstyle 5b\t{-}14$};
\node at (9,5) {$\scriptscriptstyle 3b\t{-}8$};
\node at (10,5) {$\scriptscriptstyle 5b\t{-}14$};
\node at (11,5) {$\scriptscriptstyle 2b\t{-}6$};
\node at (12,5) {$\scriptscriptstyle 5b\t{-}14$};
\node at (13,5) {$\scriptscriptstyle 3b\t{-}8$};
\node at (14,5) {$\scriptscriptstyle 5b\t{-}13$};
\node at (15,5) {$\scriptscriptstyle 2b\t{-}5$};
\node at (16,5) {$\scriptscriptstyle 4b\t{-}10$};
\node at (17,5) {$\scriptscriptstyle 2b\t{-}5$};
\node at (18,5) {$\scriptscriptstyle 4b\t{-}10$};
\node at (19,5) {$\scriptscriptstyle 2b\t{-}5$};
\node at (1,4) {$\scriptscriptstyle 5b\t{-}15$};
\node at (3,4) {$\scriptscriptstyle 5b\t{-}14$};
\node at (5,4) {$\scriptscriptstyle 4b\t{-}12$};
\node at (7,4) {$\scriptscriptstyle 5b\t{-}14$};
\node at (9,4) {$\scriptscriptstyle 4b\t{-}11$};
\node at (11,4) {$\scriptscriptstyle 4b\t{-}11$};
\node at (13,4) {$\scriptscriptstyle 4b\t{-}11$};
\node at (15,4) {$\scriptscriptstyle 4b\t{-}10$};
\node at (17,4) {$\scriptscriptstyle 3b\t{-}8$};
\node at (19,4) {$\scriptscriptstyle 4b\t{-}10$};
\node at (0,3) {$\scriptscriptstyle 4b\t{-}12$};
\node at (2,3) {$\scriptscriptstyle 4b\t{-}11$};
\node at (4,3) {$\scriptscriptstyle 3b\t{-}9$};
\node at (6,3) {$\scriptscriptstyle 4b\t{-}12$};
\node at (8,3) {$\scriptscriptstyle 4b\t{-}11$};
\node at (10,3) {$\scriptscriptstyle 3b\t{-}8$};
\node at (12,3) {$\scriptscriptstyle 3b\t{-}8$};
\node at (14,3) {$\scriptscriptstyle 3b\t{-}8$};
\node at (16,3) {$\scriptscriptstyle 3b\t{-}8$};
\node at (18,3) {$\scriptscriptstyle 3b\t{-}8$};
\node at (1,2) {$\scriptscriptstyle 3b\minus 8$};
\node at (3,2) {$\scriptscriptstyle 2b\minus 6$};
\node at (5,2) {$\scriptscriptstyle 3b\minus 9$};
\node at (7,2) {$\scriptscriptstyle 3b\minus 9$};
\node at (9,2) {$\scriptscriptstyle 3b\minus 8$};
\node at (11,2) {$\scriptscriptstyle 2b\minus 5$};
\node at (13,2) {$\scriptscriptstyle 2b\minus 5$};
\node at (15,2) {$\scriptscriptstyle 2b\minus 6$};
\node at (17,2) {$\scriptscriptstyle 3b\minus 8$};
\node at (19,2) {$\scriptscriptstyle 2b\minus 5$};
\node at (0,1) {$\scriptscriptstyle 2b\t{-}5$};
\node at (2,1) {$\scriptscriptstyle b\t{-}3$};
\node at (4,1) {$\scriptscriptstyle 2b\t{-}6$};
\node at (6,1) {$\scriptscriptstyle 2b\t{-}6$};
\node at (8,1) {$\scriptscriptstyle 2b\t{-}6$};
\node at (10,1) {$\scriptscriptstyle 2b\t{-}5$};
\node at (12,1) {$\scriptscriptstyle b\t{-}2$};
\node at (14,1) {$\scriptscriptstyle b\t{-}3$};
\node at (16,1) {$\scriptscriptstyle 2b\t{-}6$};
\node at (18,1) {$\scriptscriptstyle 2b\t{-}5$};
\node at (1,0) {$\scriptscriptstyle b\t{-}3$};
\node at (3,0) {$\scriptscriptstyle b\t{-}3$};
\node at (5,0) {$\scriptscriptstyle b\t{-}3$};
\node at (7,0) {$\scriptscriptstyle b\t{-}3$};
\node at (9,0) {$\scriptscriptstyle b\t{-}3$};
\node at (11,0) {$\scriptscriptstyle b\t{-}2$};
\node at (13,0) {$\scriptscriptstyle 0$};
\node at (15,0) {$\scriptscriptstyle b\t{-}3$};
\node at (17,0) {$\scriptscriptstyle b\t{-}3$};
\node at (19,0) {$\scriptscriptstyle b\t{-}2$};
\draw (1,0) ellipse (9pt and 27pt);
\draw (13,0) ellipse (7pt and 21pt);
\end{tikzpicture}
\]
\[
\begin{tikzpicture}[xscale=0.6,yscale=0.2]
\node at (0,7) {$\scriptscriptstyle b\t{-} 3$};
\node at (2,7) {$\scriptscriptstyle 2b\t{-}5$};
\node at (4,7) {$\scriptscriptstyle b\t{-}2$};
\node at (6,7) {$\scriptscriptstyle b\t{-}2$};
\node at (8,7) {$\scriptscriptstyle b\t{-}2$};
\node at (10,7) {$\scriptscriptstyle b\t{-}2$};
\node at (12,7) {$\scriptscriptstyle b\t{-}2$};
\node at (14,7) {$\scriptscriptstyle b\t{-}2$};
\node at (16,7) {$\scriptscriptstyle b\t{-}1$};
\node at (18,7) {$\scriptscriptstyle 0$};
\node at (1,6){$\scriptscriptstyle 3b\t{-}8$};
\node at (3,6){$\scriptscriptstyle 3b\t{-}7$};
\node at (5,6){$\scriptscriptstyle 2b\t{-}4$};
\node at (7,6){$\scriptscriptstyle 2b\t{-}4$};
\node at (9,6){$\scriptscriptstyle 2b\t{-}4$};
\node at (11,6){$\scriptscriptstyle 2b\t{-}4$};
\node at (13,6){$\scriptscriptstyle 2b\t{-}4$};
\node at (15,6){$\scriptscriptstyle 2b\t{-}3$};
\node at (17,6){$\scriptscriptstyle b\t{-}1$};
\node at (19,6){$\scriptscriptstyle b\t{-}2$};
\node at (0,5) {$\scriptscriptstyle 4b\t{-}10$};
\node at (1,5) {$\scriptscriptstyle 2b\t{-}5$};
\node at (2,5) {$\scriptscriptstyle 4b\t{-}10$};
\node at (3,5) {$\scriptscriptstyle 2b\t{-}5$};
\node at (4,5) {$\scriptscriptstyle 4b\t{-}9$};
\node at (5,5) {$\scriptscriptstyle 2b\t{-}4$};
\node at (6,5) {$\scriptscriptstyle 3b\t{-}6$};
\node at (7,5) {$\scriptscriptstyle b\t{-}2$};
\node at (8,5) {$\scriptscriptstyle 3b\t{-}6$};
\node at (9,5) {$\scriptscriptstyle 2b\t{-}4$};
\node at (10,5) {$\scriptscriptstyle 3b\t{-}6$};
\node at (11,5) {$\scriptscriptstyle b\t{-}2$};
\node at (12,5) {$\scriptscriptstyle 3b\t{-}6$};
\node at (13,5) {$\scriptscriptstyle 2b\t{-}4$};
\node at (14,5) {$\scriptscriptstyle 3b\t{-}5$};
\node at (15,5) {$\scriptscriptstyle b\t{-}1$};
\node at (16,5) {$\scriptscriptstyle 2b\t{-}3$};
\node at (17,5) {$\scriptscriptstyle b\t{-}2$};
\node at (18,5) {$\scriptscriptstyle 2b\t{-}3$};
\node at (19,5) {$\scriptscriptstyle b\t{-}1$};
\node at (1,4) {$\scriptscriptstyle 3b\t{-}7$};
\node at (3,4) {$\scriptscriptstyle 3b\t{-}7$};
\node at (5,4) {$\scriptscriptstyle 3b\t{-}7$};
\node at (7,4) {$\scriptscriptstyle 3b\t{-}6$};
\node at (9,4) {$\scriptscriptstyle 2b\t{-}4$};
\node at (11,4) {$\scriptscriptstyle 3b\t{-}6$};
\node at (13,4) {$\scriptscriptstyle 2b\t{-}3$};
\node at (15,4) {$\scriptscriptstyle 2b\t{-}4$};
\node at (17,4) {$\scriptscriptstyle 2b\t{-}3$};
\node at (19,4) {$\scriptscriptstyle 2b\t{-}3$};
\node at (0,3) {$\scriptscriptstyle 3b\t{-}7$};
\node at (2,3) {$\scriptscriptstyle 2b\t{-}4$};
\node at (4,3) {$\scriptscriptstyle 2b\t{-}5$};
\node at (6,3) {$\scriptscriptstyle 3b\t{-}7$};
\node at (8,3) {$\scriptscriptstyle 2b\t{-}4$};
\node at (10,3) {$\scriptscriptstyle 2b\t{-}4$};
\node at (12,3) {$\scriptscriptstyle 2b\t{-}3$};
\node at (14,3) {$\scriptscriptstyle b\t{-}2$};
\node at (16,3) {$\scriptscriptstyle 2b\t{-}4$};
\node at (18,3) {$\scriptscriptstyle 2b\t{-}3$};
\node at (1,2) {$\scriptscriptstyle 2b\minus 4$};
\node at (3,2) {$\scriptscriptstyle b\minus 2$};
\node at (5,2) {$\scriptscriptstyle 2b\minus 5$};
\node at (7,2) {$\scriptscriptstyle 2b\t{-}5$};
\node at (9,2) {$\scriptscriptstyle 2b\t{-}4$};
\node at (11,2) {$\scriptscriptstyle b\t{-}1$};
\node at (13,2) {$\scriptscriptstyle b\t{-}2$};
\node at (15,2) {$\scriptscriptstyle b\t{-}2$};
\node at (17,2) {$\scriptscriptstyle 2b\t{-}4$};
\node at (19,2) {$\scriptscriptstyle b\t{-}1$};
\node at (0,1) {$\scriptscriptstyle b\t{-}2$};
\node at (2,1) {$\scriptscriptstyle b\t{-}2$};
\node at (4,1) {$\scriptscriptstyle b\t{-}2$};
\node at (6,1) {$\scriptscriptstyle b\t{-}3$};
\node at (8,1) {$\scriptscriptstyle 2b\t{-}5$};
\node at (10,1) {$\scriptscriptstyle b\t{-}1$};
\node at (12,1) {$\scriptscriptstyle 0$};
\node at (14,1) {$\scriptscriptstyle b\t{-}2$};
\node at (16,1) {$\scriptscriptstyle b\t{-}2$};
\node at (18,1) {$\scriptscriptstyle b\t{-}2$};
\node at (1,0) {$\scriptscriptstyle 0$};
\node at (3,0) {$\scriptscriptstyle b\t{-}2$};
\node at (5,0) {$\scriptscriptstyle 0$};
\node at (7,0) {$\scriptscriptstyle b\t{-}3$};
\node at (9,0) {$\scriptscriptstyle b\t{-}2$};
\node at (11,0) {$\scriptscriptstyle 1$};
\node at (13,0) {$\scriptscriptstyle 0$};
\node at (15,0) {$\scriptscriptstyle b\t{-}2$};
\node at (17,0) {$\scriptscriptstyle 0$};
\node at (19,0) {$\scriptscriptstyle b\t{-}2$};
\draw (1,0) ellipse (7pt and 21pt);
\draw (11,0) ellipse (7pt and 21pt);
\draw (17,0) ellipse (7pt and 21pt);
\end{tikzpicture}
\]
\[
\begin{tikzpicture}[xscale=0.5,yscale=0.2]
\node at (0,7) {$\scriptscriptstyle b\t{-}2$};
\node at (2,7) {$\scriptscriptstyle b\t{-}1$};
\node at (4,7) {$\scriptscriptstyle 0$};
\node at (6,7) {$\scriptscriptstyle b\t{-}1$};
\node at (8,7) {$\scriptscriptstyle 0$};
\node at (10,7) {$\scriptscriptstyle 0$};
\node at (12,7) {$\scriptscriptstyle b\t{-}1$};
\node at (14,7) {$\scriptscriptstyle 0$};
\node at (1,6){$\scriptscriptstyle 2b\t{-}3$};
\node at (3,6){$\scriptscriptstyle b\t{-}1$};
\node at (5,6){$\scriptscriptstyle b\t{-}1$};
\node at (7,6){$\scriptscriptstyle b\t{-}1$};
\node at (9,6){$\scriptscriptstyle 0$};
\node at (11,6){$\scriptscriptstyle b\t{-}1$};
\node at (13,6){$\scriptscriptstyle b\t{-}1$};
\node at (15,6){$\scriptscriptstyle 0$};
\node at (0,5) {$\scriptscriptstyle 2b\t{-}3$};
\node at (1,5) {$\scriptscriptstyle b\t{-}2$};
\node at (2,5) {$\scriptscriptstyle 2b\t{-}3$};
\node at (3,5) {$\scriptscriptstyle b\t{-}1$};
\node at (4,5) {$\scriptscriptstyle 2b\t{-}2$};
\node at (5,5) {$\scriptscriptstyle b\t{-}1$};
\node at (6,5) {$\scriptscriptstyle b\t{-}1$};
\node at (7,5) {$\scriptscriptstyle 0$};
\node at (8,5) {$\scriptscriptstyle b\t{-}1$};
\node at (9,5) {$\scriptscriptstyle b\t{-}1$};
\node at (10,5) {$\scriptscriptstyle b\t{-}1$};
\node at (11,5) {$\scriptscriptstyle 0$};
\node at (12,5) {$\scriptscriptstyle b\t{-}1$};
\node at (13,5) {$\scriptscriptstyle b\t{-}1$};
\node at (14,5) {$\scriptscriptstyle b\t{-}1$};
\node at (15,5) {$\scriptscriptstyle 0$};
\node at (16,5) {$\scriptscriptstyle 0$};
\node at (1,4) {$\scriptscriptstyle b\t{-}1$};
\node at (3,4) {$\scriptscriptstyle 2b\t{-}3$};
\node at (5,4) {$\scriptscriptstyle b\t{-}1$};
\node at (7,4) {$\scriptscriptstyle b\t{-}1$};
\node at (9,4) {$\scriptscriptstyle b\t{-}1$};
\node at (11,4) {$\scriptscriptstyle b\t{-}1$};
\node at (13,4) {$\scriptscriptstyle 0$};
\node at (15,4) {$\scriptscriptstyle b\t{-}1$};
\node at (17,4) {$\scriptscriptstyle 0$};
\node at (0,3) {$\scriptscriptstyle b\t{-}1$};
\node at (2,3) {$\scriptscriptstyle b\t{-}1$};
\node at (4,3) {$\scriptscriptstyle b\t{-}2$};
\node at (6,3) {$\scriptscriptstyle b\t{-}1$};
\node at (8,3) {$\scriptscriptstyle b\t{-}1$};
\node at (10,3) {$\scriptscriptstyle b\t{-}1$};
\node at (12,3) {$\scriptscriptstyle 0$};
\node at (14,3) {$\scriptscriptstyle 0$};
\node at (16,3) {$\scriptscriptstyle b\t{-}1$};
\node at (18,3) {$\scriptscriptstyle 0$};
\node at (1,2) {$\scriptscriptstyle b\t{-}1$};
\node at (3,2) {$\scriptscriptstyle 0$};
\node at (5,2) {$\scriptscriptstyle b\t{-}2$};
\node at (7,2) {$\scriptscriptstyle b\t{-}1$};
\node at (9,2) {$\scriptscriptstyle b\t{-}1$};
\node at (11,2) {$\scriptscriptstyle 0$};
\node at (13,2) {$\scriptscriptstyle 0$};
\node at (15,2) {$\scriptscriptstyle 0$};
\node at (17,2) {$\scriptscriptstyle b\t{-}1$};
\node at (19,2) {$\scriptscriptstyle 0$};
\node at (0,1) {$\scriptscriptstyle b\t{-}1$};
\node at (2,1) {$\scriptscriptstyle 0$};
\node at (4,1) {$\scriptscriptstyle 0$};
\node at (6,1) {$\scriptscriptstyle b\t{-}2$};
\node at (8,1) {$\scriptscriptstyle b\t{-}1$};
\node at (10,1) {$\scriptscriptstyle 0$};
\node at (12,1) {$\scriptscriptstyle 0$};
\node at (14,1) {$\scriptscriptstyle 0$};
\node at (16,1) {$\scriptscriptstyle 0$};
\node at (18,1) {$\scriptscriptstyle b\t{-}1$};
\node at (20,1) {$\scriptscriptstyle 0$};
\node at (1,0) {$\scriptscriptstyle 1$};
\node at (3,0) {$\scriptscriptstyle 0$};
\node at (5,0) {$\scriptscriptstyle 0$};
\node at (7,0) {$\scriptscriptstyle b\t{-}2$};
\node at (9,0) {$\scriptscriptstyle 1$};
\node at (11,0) {$\scriptscriptstyle 0$};
\node at (13,0) {$\scriptscriptstyle 0$};
\node at (15,0) {$\scriptscriptstyle 0$};
\node at (17,0) {$\scriptscriptstyle 0$};
\node at (19,0) {$\scriptscriptstyle b\t{-}1$};
\node at (21,0) {$\scriptscriptstyle 0$};
\draw (1,0) ellipse (8.4pt and 21pt);
\draw (9,0) ellipse (8.4pt and 21pt);
\draw (21,0) ellipse (8.4pt and 21pt);
\end{tikzpicture}
\]
Thus there are precisely $b-3$ maps from $M$ to $R$, one map from $M$ to $A_4$, one map from $M$ to $B_2$ and one map from $M$ to $C$.  This proves that the quiver of the reconstruction algebra is
\[
\begin{tikzpicture}[xscale=1.4,yscale=1.4,bend angle=10, looseness=1]
 \node (0) at (0,0) {$\scriptstyle B_{1}$};
 \node (1) at (1,0) {$\scriptstyle B_{2}$};
 \node (1b) at (2,1) {$\scriptstyle C$};
 \node (2) at (2,0) {$\scriptstyle M$};
 \node (3) at (3,0) {$\scriptstyle A_{4}$};
 \node (4) at (4,0) {$\scriptstyle A_{3}$};
\node (5) at (5,0) {$\scriptstyle A_{2}$};
\node (6) at (6,0) {$\scriptstyle A_{1}$};
\node (R) at (2,-1) {$\scriptstyle R$};
\draw [bend right,->] (0) to (1);
\draw [bend right,->] (1) to (0);
\draw [bend right,->] (1) to (2);
\draw [bend right,->] (2) to (1);
\draw [bend right,->] (2) to (3);
\draw [bend right,->] (3) to (2);
\draw [bend right,->] (3) to (4);
\draw [bend right,->] (4) to (3);
\draw [bend right,->] (4) to (5);
\draw [bend right,->] (5) to (4);
\draw [bend right,->] (5) to (6);
\draw [bend right,->] (6) to (5);
\draw [bend right,->] (2) to (1b);
\draw [bend right,->] (1b) to (2);
\draw [bend right=5,->] (0) to (R);
\draw [bend right=5,->] (R) to (0);
\draw [bend right=3,->] ($(6)+(-150:6.5pt)$) to (R);
\draw [bend right=3,->] ($(R)+(5:7.5pt)$) to ($(6)+(-130:5.5pt)$);
\draw[->] (-0.5,0) arc (180:95:2.5cm and 1cm); 
\draw (-0.5,0) arc (-180:-95:2.5cm and 1cm);  
\draw (-0.7,0) arc (180:91:2.5cm and 1.075cm); 
\draw[->] (-0.7,0) arc (-180:-90:2.5cm and 1.075cm);  
\draw[->,green] ($(2)+(-100:5.5pt)$) -- ($(R) + (100:5.5pt)$);
\draw[->,green] ($(2)+(-80:5.5pt)$) -- ($(R) + (80:5.5pt)$);
 \node at (2,-0.5) {$\scriptstyle ...$};
\end{tikzpicture}
\]
where there are $b-3$ arrows from $M$ to $R$.
\end{example}

\begin{example}
Consider the group $\mathbb{I}_{7}$.  By \cite[9.3]{Iyama_Wemyss_specials}  the indecomposable special CM modules occupy the following positions in the AR quiver:

{\tiny{
\[
\xymatrix@C=-1pt@R=-0pt{
&.&&.&&.&&.&&.&&.&&{}\drop{Y_2}{}& \\
.&&.&&.&&.&&.&&.&&.&&.&\\
.&.&.&.&.&.&{}\drop{N}{}&.&.&.&.&.&.&.&.\\
.&&.&&.&&.&&.&&.&&.&&.& \\
&.&&.&&.&&.&&.&&.&&.&\\
.&&.&&.&&.&&.&&.&&.&&.\\
&{}\drop{X}{}&&.&&.&&.&&.&&{}\drop{Z_2}{}&&.&  \\
{}\drop{R}{}&&.&&.&&{}\drop{Y_1}{}&&.&&.&&{}\drop{Z_1}{}&&{}\drop{R}{}}
\]
}}Now it is clear that the only map out of $R$ is to $X$.  Further

{\tiny{
\[
\begin{array}{ccc}
\begin{array}{c}
\xymatrix@C=-4pt@R=-2pt{
&.&&.&&.&&1&&0&\\
.&&.&&.&&1&&1&&0\\
.&.&.&.&.&1&{{}\drop\xycircle<4pt,4pt>{}1}&1&1&1&0&0\\
.&&.&&1&&1&&0&&1&&0\\
&.&&1&&1&&0&&0&&1&&0 \\
.&&1&&1&&0&&0&&0&&1&&0 \\
&{{}\drop\xycircle<4pt,4pt>{}{}\drop\xycircle<4.5pt,4.5pt>{}1}&&1&&0&&0&&0&&{{}\drop\xycircle<4pt,4pt>{}0}&&1&&0\\
.&&1&&0&&{{}\drop\xycircle<4pt,4pt>{}0}&&0&&0&&{{}\drop\xycircle<4pt,4pt>{}0}&&{{}\drop\xycircle<4pt,4pt>{}1}&&0
}\\
X \mbox{ calculation}
\end{array}&
\begin{array}{c}
\xymatrix@C=-4pt@R=-2pt{
&.&&1&&0&&{{}\drop\xycircle<4pt,4pt>{}1}&&1&&0\\
.&&1&&1&&1&&1&&1&&0\\
{{}\drop\xycircle<4pt,4pt>{}{}\drop\xycircle<4.5pt,4.5pt>{}1}&1&0&1&1&2&1&2&1&1&0&1&1&0\\
.&&1&&1&&2&&1&&1&&0\\
&.&&1&&1&&1&&1&&0 \\
.&&.&&1&&0&&1&&0 \\
&.&&.&&{{}\drop\xycircle<4pt,4pt>{}1}&&0&&{{}\drop\xycircle<4pt,4pt>{}1}\\
.&&.&&.&&{{}\drop\xycircle<4pt,4pt>{}0}&&{{}\drop\xycircle<4pt,4pt>{}0}\\
}\\
N \mbox{ calculation}
\end{array}&
\begin{array}{c}
\xymatrix@C=-4pt@R=-2pt{
&.&&.&&.&&{{}\drop\xycircle<4pt,4pt>{}1}&\\
.&&.&&.&&1&&0\\
.&.&.&.&.&1&1&1&0&0\\
.&&.&&1&&0&&1&&0\\
&.&&1&&0&&0&&1&&0 \\
.&&1&&0&&0&&0&&1&&0 \\
&1&&0&&{{}\drop\xycircle<4pt,4pt>{}0}&&0&&{{}\drop\xycircle<4pt,4pt>{}0}&&1&&0\\
{{}\drop\xycircle<4pt,4pt>{}{}\drop\xycircle<4.5pt,4.5pt>{}1}&&0&&0&&{{}\drop\xycircle<4pt,4pt>{}0}&&{{}\drop\xycircle<4pt,4pt>{}0}&&0&&1&&{{}\drop\xycircle<4pt,4pt>{}0}
}\\
Y_1 \mbox{ calculation}
\end{array}
\end{array}
\]}}
{\tiny{
\[
\begin{array}{cc}
\begin{array}{c}
\xymatrix@C=-4pt@R=-2pt{
{{}\drop\xycircle<4pt,4pt>{}{}\drop\xycircle<4.5pt,4.5pt>{}1}&&0&&0&&1&&0&&0&&1&&{{}\drop\xycircle<4pt,4pt>{}0}\\
&1&&0&&1&&1&&0&&1&&0\\
.&.&1&1&1&0&1&{{}\drop\xycircle<4pt,4pt>{}1}&1&1&1&0&0\\
&.&&1&&1&&1&&1&&0\\
.&&.&&1&&1&&1&&0 \\
&.&&.&&1&&1&&0 \\
.&&.&&.&&1&&0\\
&.&&.&&.&&{{}\drop\xycircle<4pt,4pt>{}1}\\
}\\
Y_2 \mbox{ calculation}
\end{array}&
\begin{array}{c}
\xymatrix@C=-4pt@R=-2pt{
&.&&.&&.&&1&&0\\
.&&.&&.&&1&&1&&0\\
.&.&.&.&.&1&1&1&0&1&{{}\drop\xycircle<4pt,4pt>{}1}&0\\
.&&.&&1&&0&&1&&1&&0\\
&.&&1&&0&&0&&1&&1&&0 \\
.&&1&&0&&0&&0&&1&&1&&0 \\
&{{}\drop\xycircle<4pt,4pt>{}{}\drop\xycircle<4.5pt,4.5pt>{}1}&&0&&{{}\drop\xycircle<4pt,4pt>{}0}&&0&&0&&1&&1&&{{}\drop\xycircle<4pt,4pt>{}0}\\
.&&{{}\drop\xycircle<4pt,4pt>{}1}&&{{}\drop\xycircle<4pt,4pt>{}0}&&0&&0&&{{}\drop\xycircle<4pt,4pt>{}0}&&1&&0&&{{}\drop\xycircle<4pt,4pt>{}0}
}\\
Z_2 \mbox{ calculation}
\end{array}
\end{array}
\]
}}Finally the number of irreducible maps out of $Z_1$ is determined by

{\tiny{
\[
\begin{array}{c}
\xymatrix@C=-4pt@R=-2pt{
&.&&.&&.&&1&&0&&0&&1&&{{}\drop\xycircle<4pt,4pt>{}0}&&1&&0&&0&&1&&0&&1&&{{}\drop\xycircle<4pt,4pt>{}0}&&0&&1&&0&&0\\
.&&.&&.&&1&&1&&0&&1&&1&&1&&1&&0&&1&&1&&1&&1&&0&&1&&1&&0\\
.&.&.&.&.&1&1&1&{{}\drop\xycircle<4pt,4pt>{}0}&1&1&1&0&1&1&2&1&1&0&1&1&1&{{}\drop\xycircle<4pt,4pt>{}0}&1&1&2&1&1&0&1&1&1&0&1&0&0\\
.&&.&&1&&0&&1&&1&&1&&1&&1&&1&&1&&1&&1&&1&&1&&1&&1&&0\\
&.&&1&&0&&0&&1&&1&&1&&0&&1&&1&&1&&1&&0&&1&&1&&1&&0\\
.&&1&&0&&0&&0&&1&&1&&0&&0&&1&&1&&1&&0&&0&&1&&1&&0\\
&1&&{{}\drop\xycircle<4pt,4pt>{}0}&&0&&0&&0&&1&&{{}\drop\xycircle<4pt,4pt>{}1}&&0&&{{}\drop\xycircle<4pt,4pt>{}0}&&1&&1&&0&&0&&{{}\drop\xycircle<4pt,4pt>{}0}&&1&&{{}\drop\xycircle<4pt,4pt>{}0}\\
{{}\drop\xycircle<4pt,4pt>{}{}\drop\xycircle<4.5pt,4.5pt>{}1}&&{{}\drop\xycircle<4pt,4pt>{}0}&&0&&0&&{{}\drop\xycircle<4pt,4pt>{}0}&&0&&1&&{{}\drop\xycircle<4pt,4pt>{}0}&&{{}\drop\xycircle<4pt,4pt>{}0}&&0&&1&&{{}\drop\xycircle<4pt,4pt>{}0}&&0&&0&&{{}\drop\xycircle<4pt,4pt>{}0}&&{{}\drop\xycircle<4pt,4pt>{}1}
}
\end{array}
\]
}}and so the quiver is
\[
\begin{tikzpicture}[xscale=1.2,yscale=1.2,bend angle=10, looseness=1]
 \node (0) at (0,0) {$\scriptstyle Y_{1}$};
 \node (1) at (1,0) {$\scriptstyle Y_{2}$};
 \node (1b) at (2,1) {$\scriptstyle X$};
 \node (2) at (2,0) {$\scriptstyle N$};
 \node (3) at (3,0) {$\scriptstyle Z_{2}$};
 \node (4) at (4,0) {$\scriptstyle Z_{1}$};
\node (R) at (2,2) {$\scriptstyle R$};
\draw [bend right,->] (0) to (1);
\draw [bend right,->] (1) to (0);
\draw [bend right,->] (1) to (2);
\draw [bend right,->] (2) to (1);
\draw [bend right,->] (2) to (3);
\draw [bend right,->] (3) to (2);
\draw [bend right,->] (3) to (4);
\draw [bend right,->] (4) to (3);
\draw [bend right,->] (2) to (1b);
\draw [bend right,->] (1b) to (2);
\draw [bend right,->] (1b) to (R);
\draw [bend right,->] (R) to (1b);
\draw[->,green] (4) -- (R);
\end{tikzpicture}
\]
\end{example}

\section{Quotient Singularities}\label{groups}
We now illustrate the details of the previous sections for all quotient singularities.  The finite small subgroups of $\GL(2,\C{})$ and their dual graphs were classified by Brieskorn \cite{Brieskorn}, but here we use the notation from Riemenschneider \cite{Riemenschneider_invarianten}.  The
classification of finite small subgroups of $\GL(2,\C{})$ is summarized in the following table:
\[
\begin{array}{ccc}
\t{Type} & \t{Notation} & \t{Conditions}\\
\mathbb{A} & \frac{1}{r}(1,a)=\left\langle \left(\begin{smallmatrix} \ve_r & 0\\ 0& \ve_r^a
\end{smallmatrix}\right)\right\rangle & 1<a<r, (r,a)=1 \\
\mathbb{D}& \begin{array}{cc}
\mathbb{D}_{n,q}=\left\{ \begin{array}{cc} \langle \psi_{2q},
\tau, \varphi_{2(n-q)} \rangle& \mbox{if } n-q\equiv 1 \mbox{ mod
}2\\\langle \psi_{2q}, \tau\varphi_{4(n-q)} \rangle& \mbox{if }
n-q\equiv 0 \mbox{ mod }2
\end{array}\right.
\end{array}& 1<q<n, (n,q)=1\\
\mathbb{T}
&\mathbb{T}_m=\left\{ \begin{array}{cc} \langle \psi_{4},
\tau, \eta, \varphi_{2m} \rangle& \mbox{if } m\equiv 1,5 \mbox{ mod
}6\\\langle \psi_{4},\tau,\eta\varphi_{6m} \rangle& \mbox{if }
m\equiv 3 \mbox{ mod }6
\end{array}\right. & m\equiv 1,3,5 \mbox{ mod }6\\
\mathbb{O}&
\mathbb{O}_m=\langle \psi_{8},\tau,\eta,
\varphi_{2m} \rangle
& m\equiv 1,5,7,11 \mbox{ mod }12
\\
\mathbb{I}&
\mathbb{I}_m=\langle \left(\begin{smallmatrix}0&-1\\1&0\end{smallmatrix}\right),\omega,\iota, \varphi_{2m} \rangle
&  \begin{array}{rl}m\equiv& 1, 7,11,13,17,19,\\ &23,29 \mbox{ mod }30\end{array}
\end{array}
\]
with the matrices
\[
\begin{array}{c}
\begin{array}{cccc}
\psi_k= \begin{pmatrix}\e_k & 0\\ 0& \e_k^{-1}
\end{pmatrix} &\tau = \begin{pmatrix}0 & \e_4\\ \e_4& 0
\end{pmatrix}&\varphi_k= \begin{pmatrix}\e_k & 0\\ 0& \e_k
\end{pmatrix}&\eta=\frac{1}{\sqrt{2}} \begin{pmatrix}\e_8 & e_8^3\\ \e_8& \e_8^7 \end{pmatrix}
\end{array}\\
\begin{array}{cc}
\omega= \begin{pmatrix}\e_5^3 & 0\\ 0& \e_5^2
\end{pmatrix}&
\iota=\frac{1}{\sqrt{5}} \begin{pmatrix}\e_5^4-\e_5 & \e_5^2-\e_5^3\\ \e_5^2-\e_5^3& \e_5-\e_5^4
\end{pmatrix}
\end{array}
\end{array}
\]
where $\e_t$ is a primitive ${t}^{\rm th}$ root of unity. 

\begin{definition}
Given a labelled Dynkin diagram we associate the corresponding $Z_f$. We call $Z_f$ \\
\t{(i)} maximal if it takes the same value as the $Z_f$ associated to the same Dynkin diagram labelled with $(-2)$-curves.\\
\t{(ii)} reduced if $Z_f$ consists only of ones.
\end{definition}

Now to build the quiver of the reconstruction algebra associated to $\C{2}/G$, using Theorem~\ref{main} and Lemma~\ref{basecase} it is easy to transfer the  intersection theory combinatorics into the following three rules:
\begin{itemize}
\item[(1)] \textbf{If $Z_f$ is maximal} then connect $\star$ to make the extended Dynkin diagram, double the quiver and for every $\alpha_i>2$, add an extra $\alpha_i-2$ arrows from that vertex to $\star$.
\item[(2)] \textbf{If $Z_f$ is reduced \emph{but not maximal}} then we are not in type $\mathbb{A}$ so the Dynkin diagram has a vertex which connects to three others; we call this vertex the middle vertex.  To build the quiver, connect $\star$ to the vertices at the end of each `arm' coming out of the middle vertex, double the resulting quiver and then add extra arrows subject to the following rule: if some $\alpha_i>2$, add an extra $\alpha_i-2$ arrows from that vertex to $\star$, \emph{except} at the middle vertex of the Dynkin diagram where we only add an extra $\alpha_t-3$ arrows. 
\item[(3)] \textbf{If $Z_f$ is neither maximal nor reduced.}  Denote by $C$ the vertex with $\alpha_C\geq 3$ which is closest to the middle vertex.  Now inside $Z_f$ we may find the largest subvector which is the maximal $Z_f$ of some Dynkin subdiagram $Q^\prime$ of type $D$.  Considering this subdiagram $Q^\prime$, connect $\star$ to form the extended Dynkin diagram of $Q^{\prime}$.  Then also connect $\star$ to those vertices at the end of each arm of the original $Q$ which do not lie in $Q^\prime$, double the resulting quiver and add extra arrows subject to the following rule:  if some $\alpha_i>2$, add an extra $\alpha_i-2$ arrows from that vertex to $\star$, \emph{except} at the vertex $C$, where we only add $\alpha_C-3$ extra arrows. 
\end{itemize}
\begin{remark}
Case (3) is somewhat artificial since it does not make sense for an arbitrary labelled Dynkin diagram.  However for all the labelled Dynkin diagrams coming from quotient singularities (some in type $\mathbb{D}$, then  $\mathbb{I}_{13}$ and $\mathbb{I}_{23}$) it gives the correct quiver.  See Example~\ref{type3example} below.
\end{remark}
\begin{remark}
We remark that there are both geometric and algebraic consequences of the above trichotomy.  It turns out that geometrically in (1) the minimal resolution is very similar to the minimal resolutions arising from ADE quotients, where in (2) the minimal resolution is almost identical to those found in the toric case.  Case (3) is somewhere in between these two extremes.
\end{remark}

Case (3) is illustrated in Example~\ref{type3example} below --- \ref{type3example}(i) is an example from type $\mathbb{D}$ whereas \ref{type3example}(ii) is $\mathbb{I}_{13}$.  We label by dotted lines the largest subvector which is the maximal $Z_f$ of some Dynkin subdiagram of type $D$.
\begin{example}\label{type3example}
\[
\begin{tikzpicture}[yscale=1.2]
\node at (-3,0) {(i)};
\node at (0,0) {\begin{tikzpicture}[xscale=0.85,yscale=1]
 \node (0) at (0,0) [vertex] {};
 \node (1) at (1,0) [vertex] {};
 \node (1b) at (1,0.75) [vertex] {};
 \node (2) at (2,0) [vertex] {};
 \node (3) at (3,0) [vertex] {};
 \node (4) at (4,0) [vertex] {};
 \node (5) at (5,0)[vertex] {};
 \node (0a) at (-0.1,-0.25) {$\scriptstyle - 2$};
 \node (1a) at (0.9,-0.25) {$\scriptstyle -2$};
 \node (1ba) at (0.65,0.75) {$\scriptstyle - 2$};
 \node (2a) at (1.9,-0.25) {$\scriptstyle - 2$};
 \node (2a) at (2.9,-0.25) {$\scriptstyle - 5$};
 \node (4a) at (3.9,-0.25) {$\scriptstyle -2$};
 \node (5a) at (4.9,-0.25) {$\scriptstyle - 3$};
\draw [-] (0) -- (1);
\draw [-] (1) -- (2);
\draw [-] (2) -- (3);
\draw [-] (3) -- (4);
\draw [-] (4) -- (5);
\draw [-] (1) -- (1b);
\end{tikzpicture}};
\node at (3.5,0) {$\scriptstyle Z_{f}=$};
\node at (6,0.1)
{\begin{tikzpicture}[xscale=0.75,yscale=1]
 \node (0) at (0,0)  {$\scriptstyle 1$};
 \node (1) at (1,0)  {$\scriptstyle 2$};
 \node (1b) at (1,0.75)  {$\scriptstyle 1$};
 \node (2) at (2,0)  {$\scriptstyle 2$};
 \node (3) at (3,0) {$\scriptstyle 1$};
 \node (4) at (4,0)  {$\scriptstyle 1$};
 \node (5) at (5,0) {$\scriptstyle 1$};
\draw [densely dotted] (0) -- (1);
\draw [densely dotted] (1) -- (2);
\draw [densely dotted] (2) -- (3);
\draw [densely dotted] (1) -- (1b);
\end{tikzpicture}};
%%%%%%%%%%%%
\node at (-3,-1.5) {(ii)};
\node at (0,-1.5) {\begin{tikzpicture}[xscale=0.85,yscale=1]
 \node (0) at (0,0) [vertex] {};
 \node (1) at (1,0) [vertex] {};
 \node (1b) at (2,0.75) [vertex] {};
 \node (2) at (2,0) [vertex] {};
 \node (3) at (3,0) [vertex] {};
 \node (4) at (4,0) [vertex] {};
 \node (0a) at (-0.1,-0.25) {$\scriptstyle - 2$};
 \node (1a) at (0.9,-0.25) {$\scriptstyle -2$};
 \node (1ba) at (1.65,0.75) {$\scriptstyle - 2$};
 \node (2a) at (1.9,-0.25) {$\scriptstyle - 2$};
 \node (2a) at (2.9,-0.25) {$\scriptstyle - 3$};
 \node (4a) at (3.9,-0.25) {$\scriptstyle -2$};
\draw [-] (0) -- (1);
\draw [-] (1) -- (2);
\draw [-] (2) -- (3);
\draw [-] (3) -- (4);
\draw [-] (2) -- (1b);
\end{tikzpicture}};
\node at (3.5,-1.5) {$\scriptstyle Z_{f}=$};
\node at (6,-1.4)
{\begin{tikzpicture}[xscale=0.75,yscale=1]
 \node (0) at (0,0)  {$\scriptstyle 1$};
 \node (1) at (1,0)  {$\scriptstyle 2$};
 \node (1b) at (2,0.75)  {$\scriptstyle 1$};
 \node (2) at (2,0)  {$\scriptstyle 2$};
 \node (3) at (3,0) {$\scriptstyle 1$};
 \node (4) at (4,0)  {$\scriptstyle 1$};
\draw [densely dotted] (0) -- (1);
\draw [densely dotted] (1) -- (2);
\draw [densely dotted] (2) -- (3);
\draw [densely dotted] (2) -- (1b);
\end{tikzpicture}};
\end{tikzpicture}
\]
\end{example}

In the sections below, when stating the dual graph we are referring to
Brieskorn \cite[2.11]{Brieskorn}.  On the vertices of the quivers, for the convenience of the reader we write $Z_{f}$ and hence the dimension of the irreducible special representation that corresponds to that vertex.
In types $\mathbb{T}$,  $\mathbb{O}$ and  $\mathbb{I}$, labelled
arrows with $...$ means that there are $b-3$ arrows from that vertex
to $\star$, where if $b=3$ there are no arrows.

\section{Type $\mathbb{A}$}\label{Section:TypeA}
Given $G=\frac{1}{r}(1,a)$ consider the Jung--Hirzebruch continued fraction expansion $\frac{r}{a}=[\alpha_1,\hdots,\alpha_n]$.  It was first proved in \cite{Wemyss_reconstruct_A} that the quiver of the reconstruction algebra can be described as follows:  take the labelled dual graph of the minimal resolution
\[
% [inline block 0: 79 envs, 75069 chars -> data_tex | \begin{tikzpicture}[xscale=1.4]   \node (1) at (1,0) [vertex] {};...]
};
\end{tikzpicture}
\]

\noindent
{\bf Acknowledgement.}
This paper was originally written whilst the author visited Nagoya University
with the Cecil King Travel Scholarship from the London Mathematical
Society, and was substantially revised when the author held a JSPS Postdoctoral Fellowship, also at Nagoya University.   The author would like to thank the Cecil King Foundation, the LMS and the JSPS for funding this work, and Nagoya University for kind hospitality.  The author would also like to thank Osamu Iyama and Akira Ishii for helpful conversations and comments.

\end{document}